\documentclass[a4paper,UKenglish,cleveref, autoref, thm-restate]{lipics-v2021}
  
\usepackage[inline]{enumitem}
  \usepackage{cite}
  \usepackage[utf8]{inputenc}
\usepackage[T1]{fontenc}
\usepackage{amsmath,amsthm,amssymb,amsfonts}
\usepackage{mathtools}
\usepackage{xspace}
\usepackage{hyperref}
\usepackage{cleveref}
\usepackage{verbatim}
\usepackage{tikz}
\usepackage{setspace}
\usetikzlibrary{snakes}
\usepackage[procnumbered,linesnumbered,ruled,vlined]{algorithm2e}

	\newenvironment{subproof}[1][\proofname]{%
	\begin{proof}[#1]%
	}{%
	\end{proof}%
}

\hypersetup{
    colorlinks=true,
    citecolor=red,
    linkcolor=blue,
    filecolor=magenta,      
    urlcolor=black,
}
\usetikzlibrary{positioning, fadings, backgrounds}

\usepackage[textsize=footnotesize,color=green!40]{todonotes}

\makeatother

\newcommand{\heightn}[1]{\ell_{#1}}
\usetikzlibrary{decorations.pathmorphing}
\tikzset{snake it/.style={decorate, decoration=snake}}

\newcommand{\dist}[2]{d(#1,#2)}
\newcommand{\distance}[3]{d_{#1}\left(#2,#3\right) }

\newcommand{\trc}[2]{trace_{#1}\left(#2\right)}

\newcommand{\ipco}[1]{ipco\left(#1\right)}
\newcommand{\sipco}[1]{sipco\left(#1\right)}
\newcommand{\diam}{\mathrm{diam}}

\newcommand{\downboundary}[1]{\delta^{\downarrow}_{#1}}
\newcommand{\upboundary}[1]{\delta^{\uparrow}_{#1}}

\makeatletter
\newcounter{claimcnt}[theorem]

\theoremstyle{plain}
\newtheorem{claimx}[claimcnt]{Claim}
\AtBeginDocument{%

}
\makeatother

\nolinenumbers

\newtheorem{question}[theorem]{Question}

\definecolor{dartmouthgreen}{rgb}{0.05, 0.5, 0.06}

\title{Additive Quasi-isometry via rooted graph partitions and layering partition}

\author{Dibyayan Chakraborty}{School of Computer Science, University of Leeds, United Kingdom.}{}{}{}

\author{Yann Vax\`{e}s}{Laboratoire d'Informatique et Systèmes, Aix-Marseille Université and CNRS, Faculté des Sciences de Luminy, F-13288 Marseille, Cedex 9, France.}{}{}{}

\keywords{Quasi-isometry, additive distortion, $K_{2,t}$-asymptotic minor-free graphs, layering partition.}

\ccsdesc[500]{Mathematics of computing~Graph theory.}

\authorrunning{D. Chakraborty, Y. Vax\`{e}s}

\titlerunning{Additive Quasi-isometry via rooted graph partitions}

\begin{document}

\maketitle

\setstretch{1}

\begin{abstract}
For a graph $H$, $\langle H \rangle$ denotes the class of all subdivisions of 
$H$ and $tw(H)$ denotes the treewidth of $H$. In this paper, we prove the following. For $k\geq 1, R\geq 1$, let $G,H$ be two graphs such that strong isometric path complexities (Chakraborty et al. [\textsc{Disc. Math., 2026}]) of both $G$ and $\langle H \rangle$ are at most $k$, and $G$ admits an honest, ``nicely rooted'' $R$-bounded $H$-partition.   Then, there is a graph $F$ with $tw(F)\leq tw(H)$ such that $G$ admits a $(1,33\cdot R\cdot k^2)$-quasi-isometry to $F$.  	
	Using results of Albrechtsen, Distel, and Georgakopoulos (2025), we also obtain that $K_{2,t}$-asymptotic minor-free graphs admit quasi-isometries with additive distortion to $K_{2,t}$-minor-free graphs. This answers an open question raised by the above authors. 
	
	As part of our proof, we combine the graph-partition based method and the layering partition based method (Chepoi et al. [\textsc{Discrete Comput. Geom.} 2012]) to obtain additive quasi-isometry when both the source and all subdivisions of the target graph have bounded strong isometric path complexity. 
\end{abstract}

\section{Introduction}
In this paper, we consider graphs that are finite and, unless otherwise stated, connected and unweighted.
 
When is a graph \emph{quasi-isometric} (\Cref{def:quasi}) to a much simpler \emph{host} graph? This question is central to \emph{coarse graph theory}~\cite{georgakopoulos2025graph}, an area that has seen much development recently~\cite{georgakopoulos2025graph,albrechtsen2023structural,albrechtsen2025characterisation,albrechtsen2025excluding,albrechtsen2025counterexample,albrechtsen2026small,davies2026fat}. Inspired by Bonamy et al.~\cite{bonamy2023asymptotic} and Chepoi et al.~\cite{chepoi2012constant}, Georgakopoulos and Papasoglu~\cite{georgakopoulos2025graph} introduced the notion of \emph{asymptotic minors} (see Definition~\ref{def:asym}), a coarse generalisation of graph minors.  The following conjecture was posed in the same paper. 

\begin{conjecture}
Let $X$ be a graph, and let $H$ be a finite graph. Then $X$ has no $K$-fat $H$ minor for some $K \in \mathbb{N}$ if and only if $X$ is $f(K)$-quasi-isometric to a graph with no $H$-minor, where $f : \mathbb{N} \rightarrow \mathbb{N}$ depends on $H$ only.	
\end{conjecture}

While the above conjecture has been proved to be false~\cite{davies2026fat,albrechtsen2025counterexample,albrechtsen2026small} in general, it has been shown to hold for small graphs, e.g., $H=K_3$\cite{berger2024bounded,dragan2025graph,dourisboure2007tree}, $H=K_{2,3}$~\cite{chepoi2012constant} (or equivalently $H=K_4^-$~\cite{albrechtsen2025characterisation}), $H=K_4$~\cite{albrechtsen2025characterisation}. More recently, Albrechtsen, Distel, and Georgakopoulos~\cite{albrechtsen2025excluding} settled the above conjecture for $H=K_{2,t}$ for every $t\geq 3$.  

\sloppy When quasi-isometric maps with only \emph{additive distortions} (\Cref{def:add}) are allowed, the situation is much less understood. Brandst{\"a}dt et al.~\cite{brandstadt1999distance} proved that \emph{chordal} graphs (i.e., graphs without any induced cycle of length greater than three) admit a quasi-isometry to trees with additive distortion at most $2$.  Berger \& Seymour~\cite{berger2024bounded} characterized graphs admitting quasi-isometry with additive distortion to trees. See~\cite{dragan2025graph} for a comprehensive overview. This line of research is also motivated by the following question.

	\begin{question}[\cite{georgakopoulos2025graph}]\label{quest:add}
	For which finite graphs $H$ it is true that if a graph $X$ has no $K$-fat $H$-minor for some $K \in \mathbb{N}$, then $X$ admits a map of bounded additive distortion onto a graph with no $H$ minor?
\end{question}

Albrechtsen, Distel, and Georgakopoulos~\cite{albrechtsen2025excluding} specifically asked the above question for $H=K_{2,t}$. 

	\begin{question}[\cite{albrechtsen2025excluding}]\label{quest:K_2t}
	Is it true that if a graph $X$ has no $K$-fat $K_{2,t}$-minor for some $K \in \mathbb{N}, t\geq 3$, then $X$ admits a map of bounded additive distortion onto a graph with no $K_{2,t}$-minor?
\end{question}

In this paper, we answer the  Question~\ref{quest:K_2t} in the affirmative. Before we state our main results, we revisit some of the techniques often used to obtain quasi-isometry. 

%
%
%
%

\paragraph*{Graph partitions}
 
 Perhaps the most widely used method for obtaining quasi-isometry is based on \emph{graph partitions}. Informally, a graph partition of a graph $G$ over a graph $H$, also called $H$-partition, is a partition $\mathcal{P}$ of $V(G)$ along with a mapping $\mathcal{P}\rightarrow V(H)$ such that the endpoints of any edge of $G$ either lie inside the same part or lie in two parts whose corresponding vertices in $H$ are adjacent. Furthermore, the graph partition over $H$ is \emph{honest} if any edge of $H$ is also realised by an edge of $G$. The graph partition is $R$-bounded for some integer $R$ if the maximum distance between two vertices in the same partition in $G$ is at most $R$. The following lemma has been central to obtaining quasi-isometry.

 \begin{lemma}[\cite{albrechtsen2025excluding,albrechtsen2023structural}]
 	Let $H$, $G$ be graphs, and let $\mathcal{H}$ be an honest, $R$-bounded $H$-partition of $G$ for some $R \in \mathbb{R}$. Then $G$ is $(R + 1, R/(R + 1))$-quasi-isometric	to $H$.
 \end{lemma}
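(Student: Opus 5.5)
The plan is to use the map $\phi\colon V(G)\to V(H)$ that sends each vertex $v$ to the vertex $h\in V(H)$ indexing the part $P_h\in\mathcal{H}$ containing $v$. For a $(M,A)$-quasi-isometry with $M=R+1$ and $A=R/(R+1)$, we need the two-sided bound
\[
\tfrac{1}{M}\,d_G(u,v)-A\;\le\; d_H(\phi(u),\phi(v))\;\le\; M\,d_G(u,v)+A,
\]
together with coarse surjectivity. Since we assume parts are nonempty and every vertex of $H$ indexes a part, $\phi$ is onto, so coarse surjectivity is immediate.

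\textbf{Upper bound.} Take a shortest $u$–$v$ path $u=x_0,x_1,\dots,x_m=v$ in $G$. For each edge $x_ix_{i+1}$, the partition condition says $\phi(x_i)=\phi(x_{i+1})$ or $\phi(x_i)\phi(x_{i+1})\in E(H)$. Hence $(\phi(x_i))_i$ is a walk in $H$ of length at most $m$, and $d_H(\phi(u),\phi(v))\le d_G(u,v)$. This is at most $M\,d_G(u,v)+A$.

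\textbf{Lower bound.} Let $\phi(u)=h_0,h_1,\dots,h_k=\phi(v)$ be a shortest path in $H$. By honesty, for each edge $h_ih_{i+1}$ there are vertices $a_i\in P_{h_i}$ and $b_{i+1}\in P_{h_{i+1}}$ with $a_ib_{i+1}\in E(G)$. Set $b_0=u$ and $a_k=v$. Then $b_i$ and $a_i$ lie in the same part $P_{h_i}$, so $d_G(b_i,a_i)\le R$ by $R$-boundedness. Concatenating gives
\[
d_G(u,v)\le \sum_{i=0}^{k} d_G(b_i,a_i)+k\le (k+1)R+k=(R+1)k+R.
\]
Therefore
\[
d_H(\phi(u),\phi(v))=k\ge \frac{d_G(u,v)-R}{R+1}=\frac{1}{R+1}\,d_G(u,v)-\frac{R}{R+1},
\]
which is exactly the required lower bound with $M=R+1$ and $A=R/(R+1)$.

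The only real subtlety is the lower bound: honesty is needed to realise each $H$-edge by a $G$-edge, and the count must be kept to $k+1$ intra-part segments (not $2k$) to obtain the additive constant $R/(R+1)$. The rest is bookkeeping under the stated definition of quasi-isometry.
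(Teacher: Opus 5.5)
Your proof is correct. Projecting a $G$-geodesic gives $d_H(\phi(u),\phi(v))\le d_G(u,v)$, lifting an $H$-geodesic through honest edges and $R$-bounded parts gives $d_G(u,v)\le (R+1)k+R$, and nonempty parts make $\phi$ onto, which yields exactly the constants $(R+1,R/(R+1))$. The paper does not prove this lemma and only cites it, and your argument is the standard one behind the cited result.
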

 
 Due to the above lemma, researchers focussed mainly on obtaining the right graph partitions. Most of the proofs (based on graph partitions) have the following approach. They start with an arbitrarily chosen vertex $r$ called the ``root'' and a radius $R$ chosen appropriately based on the forbidden graph $H$ and the fatness parameter \(K\). Then, they consider all components of $G$ obtained by removing the ball with radius $R$ centered on $r$. This ball itself forms one of the partitions. Then based on how these components attach to the ball, specific strategies are devised on how to obtain the next partitions. For example, Albrechtsen et al.~\cite{albrechtsen2025characterisation} use the Coarse Menger's Theorem, whereas the strategy of Albrechtsen, Distel, and Georgakopoulos~\cite{albrechtsen2025excluding} exploits the structure of $K_{2,t}$-minor-free graphs. We view this approach as a ``bottom up''\footnote{We follow the view in which the root is considered to be at the bottom of a tree} approach where the process starts at a ball centered around a vertex and then proceeds by considering components remaining after deleting the vertices of the ball. At each step a new part is constructed and it is ensured that it contains at least one endpoint of an edge in $G$ whose other endpoint is contained in a previously constructed part. At the end of the procedure (for a finite graph $G$), this results in an honest partition.

 \paragraph*{Layering partition}
 
 A relatively less explored method is that of \emph{layering partition}~\cite{brandstadt1999distance,chepoi2000note}.  See \Cref{sec:prelim} for definitions. Informally, in 
 a layering partition, the vertex set of a graph is partitioned into \emph{clusters} w.r.t a fixed \emph{root} vertex. Two vertices are in the same cluster if they are at a distance $k$ from the root for some $k\geq 0$ and are end-vertices of a path that is disjoint from the ball of radius $k-1$ centered at 
 the root vertex. It was shown in~\cite{brandstadt1999distance,chepoi2000note} that the adjacency relation between the clusters is captured by a tree, known as the \emph{layering tree}. Moreover, for a fixed root vertex, the corresponding layering tree can be constructed in linear time~\cite{chepoi2000note}.
 
 Chepoi et al.~\cite{chepoi2012constant} proved that $K_{2,3}$-asymptotic minor-free graphs admit quasi-isometries to cactus graphs using layering partitions. Essentially, they processed the vertices  ``top-down'', starting from the leaves of the layering tree. They characterised the distance properties of vertices lying inside a cluster (when the source graph has no $K$-fat $K_{2,3}$-minor) and proved that each of these clusters can be partitioned into at most two balls that are ``far apart''. Then, they used these properties to build the target cactus exactly on the vertices of $G$. 
 
 An interesting aspect of the above approach is that it preserves the rooted distance of the vertices in the cactus. This raises the question: If every isometric path of the host can be covered by a bounded number of isometric paths emanating from the root, can the method be adapted to obtain a quasi-isometry with additive distortion?
 
 \paragraph*{Strong isometric path complexity}
 
The \emph{strong isometric path complexity} is a recently introduced graph invariant that captures how arbitrary isometric paths of a 
graph can be covered by a bounded number of ``rooted'' isometric paths (i.e. {isometric} paths with a common end-vertex). See \Cref{def:strong}. Informally, if the strong isometric path complexity of a graph $G$ is at most $k$, then for any isometric path $P$ of $G$ and any vertex $v\in V(G)$, the vertices of $P$ can be covered with $k$-many $v$-rooted isometric paths of $G$. 	Originally, strong isometric path complexity and its weaker variant isometric path complexity were introduced with an algorithmic motivation~\cite{c22isom,chakraborty2026isometric}. Later, it was shown in~\cite{chakraborty2025strong} that $K_{2,t}$-asymptotic minor-free graphs have bounded strong isometric path complexity. On the other hand, graphs with tree-width at most two have unbounded strong isometric path complexity. The notions of layering partitions and strong isometric path complexity have been used to obtain additive quasi-isometry for $K_{2,3}$-induced minor-free graphs to treewidth $2$ graphs~\cite{chakraborty2025k}. Recently, Papasoglu and Swenson~\cite{papasoglu2026additivequasiisometriescacti} have proved a more general result for $K_{2,3}$-asymptotic minor-free graphs.  
Interestingly, the authors' use of the notion ``monotone-height path'' is reminiscent of rooted paths and strong isometric path complexity.

\paragraph*{Our results}

In this paper, we show that the graph partition based ``bottom-up'' approach can be combined with the layering partition based ``top-down'' approach to obtain quasi-isometry when the source and all subdivisions of the target graph have bounded strong isometric path complexity. We formally state our main result. For a graph $H$, let $\langle H \rangle$ be the class of graphs that can be obtained by subdividing the edges of $H$. 

\begin{theorem}\label{thm:additive}
	For $k\geq 1, R\geq 1$, let $G,H$ be two graphs such that $\sipco{G}\leq k, \sipco{\langle H \rangle}\leq k$, and $G$ admits an honest, nicely rooted $R$-bounded $H$-partition.  There is a graph $F$ with $tw(F) \leq tw(H)$  and $G$ admits a $(1,33\cdot R \cdot k^2)$-quasi-isometry to $F$. 
\end{theorem}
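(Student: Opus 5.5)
We construct $F$ on (a subset of) $V(G)$ by replacing each part of the partition with a short tree-like gadget, and we control distances through rooted geodesics. Fix the honest, nicely rooted $R$-bounded $H$-partition $\mathcal{P}=\{P_h : h\in V(H)\}$ with root $r\in V(G)$. We take "nicely rooted" to mean two things. First, the root part contains $r$. Second, each part $P_h$ is entered by a $r$-rooted geodesic through a designated "entry" vertex $x_h$, so that every $v\in P_h$ satisfies $d(r,v)=d(r,x_h)+O(R)$.

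\textbf{Construction of $F$.} Let $\heightn{v}=d(r,v)$ be the height of $v$. Take the graph $H'\in\langle H\rangle$ obtained by subdividing each edge $hh'$ of $H$ roughly $|\heightn{x_h}-\heightn{x_{h'}}|$ times, so that each edge becomes a path realizing the height difference between the entry vertices. To each $x_h$, attach a tree $T_h$ spanning $P_h$ with $r$-monotone heights, for example a BFS tree from $x_h$ inside $P_h$ truncated to $P_h$. Attaching pendant trees at single vertices does not increase treewidth, so $tw(F)\le tw(H')=tw(H)$.

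The map $\varphi:G\to F$ is the identity on vertices. The construction preserves rooted distances up to $O(R)$: $d_F(r,v)=\heightn{v}\pm O(R)$ for every $v$. We prove this by induction on $H$ in BFS order from the root part, using honesty to find, for each part, an edge of $G$ realizing the adjacency to its parent part.

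\textbf{Upper and lower bounds via sipco.} Take $u,v\in V(G)$.

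For $d_F(u,v)\le d_G(u,v)+O(Rk^2)$: take an isometric path $P$ of $G$ from $u$ to $v$. Since $\sipco{G}\le k$, $P$ is covered by $k$ $r$-rooted isometric paths $Q_1,\dots,Q_k$. Each $Q_i$ is monotone in height, so by the rooted-distance preservation it maps to a walk in $F$ whose length is within $O(R)$ of the length of $Q_i\cap P$. Concatenating consecutive covering segments gives at most $k$ switches, each costing $O(R)$ detours inside a part. However, the segments of $P$ covered by different $Q_i$ interleave, which could cost up to $k$ switches per path, giving $O(Rk^2)$ overall.

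For the reverse bound $d_G(u,v)\le d_F(u,v)+O(Rk^2)$: we argue symmetrically in $F$. Take a geodesic of $F$. Its image in the subdivided $H'$, with pendant trees contracted, is an isometric path of a graph in $\langle H\rangle$. Since $\sipco{\langle H\rangle}\le k$, this path is covered by $k$ rooted isometric paths of $H'$. Each rooted geodesic of $H'$ lifts to an $r$-rooted path in $G$ of length at most $O(R)$ more, following parts along the entry vertices; this again uses rooted-distance preservation and the $R$-boundedness of parts. Summing over the $k$ pieces with $O(R)$ overhead per piece and per interleaving again gives $O(Rk^2)$. Tracking constants, we aim for $33Rk^2$. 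The $1$ in the multiplicative factor is automatic, since the map is the identity on vertices.

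\textbf{Main obstacle.} The delicate step is rooted-distance preservation: showing $d_F(r,v)=\heightn{v}\pm O(R)$ and that rooted geodesics in $G$ and in $H'$ correspond with only $O(R)$ loss. This is where the layering-partition viewpoint enters. We would first show that parts at similar heights connected in $H$ lie in a common cluster of the layering partition relative to $r$, and then choose the subdivision lengths so that $H'$-heights match $G$-heights of entry vertices exactly. The risk is that a rooted geodesic in $G$ passes through parts in an order not monotone in $H'$. Handling this, possibly by modifying the partition using niceness, is where we expect most of the work.
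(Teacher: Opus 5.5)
Your skeleton matches the paper's. $F$ is a subdivision of $H$ carrying pendant gadgets, the map is the identity on $V(G)$, and both directions are reduced to rooted pairs via strong isometric path complexity (Lemma~\ref{lem:sipco-distort}, applied once to $G$ and once to $F$ with $\sipco{F}\le k+2$). But the step that carries the real difficulty is asserted, not proved, and the argument you sketch for it does not work.

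\textbf{The reverse direction.} You claim each rooted geodesic of $H'$ lifts to $G$ with $O(R)$ overhead by following the parts along the entry vertices. What the reduction actually needs is $\distance{G}{x_g}{x_h}\le \distance{F}{x_g}{x_h}+c$ for every $r$-rooted pair $(x_g,x_h)$ of $F$, that is, for sub-segments of rooted geodesics.
\begin{itemize}
\item A $G$-geodesic from $r$ to $x_h$ does not help. Its trace may be a different $(s,h)$-geodesic of $H$ that avoids $g$ entirely.
\item Following the parts $g=g_1,\dots,g_m=h$ loses up to about $2R$ per step. The $G$-edge realising $g_ig_{i+1}$ may be far from both entry vertices, while the height gain is only $R_{g_i}+1$. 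So the error grows with $m$, which gives only a multiplicative bound.
\end{itemize}

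\textbf{The missing idea} is a second, structural use of $\sipco{\langle H\rangle}\le k$, namely Lemma~\ref{lem:attach-ipco}. For any path $Q_1$ of $H$, the component $C$ of $H-V(Q_1)$ containing $s$ has at most $k$ neighbours on $Q_1$. To see this, subdivide $H$ so that $Q_1$ is isometric and its attachment points are equidistant from a vertex of $C$; then no rooted geodesic covers two of them. Now take $Q_1$ to be the $(g,h)$-segment.
\begin{itemize}
\item For a node $h'$ of $Q_1$, take an $r$-rooted $G$-geodesic to the lower boundary of $h'$ that passes through the preceding part of $Q_1$. It first meets $Q_1$ at an attachment node $g'\neq h'$.
\item This gives a hop $\distance{G}{r_{g'}}{r_{h'}}\le \distance{G}{r}{r_{h'}}-\distance{G}{r}{r_{g'}}+2R$.
\item At most $k$ hops lead from $h$ down to $g$, so the loss is $2Rk$ per rooted pair (Lemma~\ref{lem:rooted-rep-F-dist}).
\end{itemize}
The $k^2$ in the theorem is this $2Rk$ multiplied by $\sipco{F}\le k+2$. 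It does not come from interleaving of covering segments; maximising overlaps makes each $P\cap Q_i$ a contiguous subpath anyway.

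\textbf{The definition of nicely rooted.} Your reading is not Definition~\ref{def:nice}, and the property you assume already follows from $R$-boundedness. The actual conditions are independent layers, level parts, and equal heights within a cluster of $H$ w.r.t.\ $s$. They are exactly what you need:
\begin{itemize}
\item Lower boundaries of nodes in one cluster are equidistant from $r$ (Lemma~\ref{lem:down-bd-dst}).
\item $\distance{G}{r}{\cdot}$ strictly increases across every $H$-edge (Lemma~\ref{lem:dst-incerase}). Hence all subdivision lengths are positive and $\distance{F}{r}{r_h}=\distance{G}{r}{r_h}$ exactly. A BFS induction losing $O(R)$ per layer would not give this.
\item The trace of an $r$-rooted geodesic of $G$ is an $s$-rooted geodesic of $H$ (Lemma~\ref{lem:trace}). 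This settles the non-monotonicity risk you leave open. Together with Lemma~\ref{lem:no-increase}, it is what makes your forward direction work, with loss $12R+4$ per rooted pair.
\end{itemize}

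\textbf{A minor point.} A BFS tree of $G[P_h]$ need not span $P_h$, nor have depth $O(R)$, since $R$ bounds the diameter only in $G$. The paper instead joins each $v\in V_h\setminus\{r_h\}$ to $r_{p(h)}$ by a pendant path of length $\distance{G}{r}{v}-\distance{G}{r}{r_{p(h)}}$.
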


The above theorem is similar in spirit to that of Lemma~\ref{lem part QI}, but we only manage to preserve the treewidth of the quotient graph. We note that the definition of ``nicely rooted graph partition'' (see Definition~\ref{def:nice}) is motivated by a similar notion introduced by Albrechtsen, Distel, and Georgakopoulos~\cite{albrechtsen2025excluding}. However, our definition  assumes certain conditions on the height of the nodes of $H$ that lie in the same cluster in the layering partition of $H$, which is a deviation.

\paragraph*{Overview of the proof.} Let $G$ be a graph that admits an honest, nicely rooted $R$-bounded $H$-partition, for some graph $H$. Further assume that $G$ and all subdivision of bounded strong isometric path complexity. To construct the graph $F$ we choose a "representative" vertex from each part of the graph partition, and for every edge of $H$ we introduce a path between the corresponding representatives. For other vertices inside a part, we arbitrarily select one "parent" of the corresponding representative and introduce a path between the concerned vertex and the path. Lengths of all paths equals the difference of ``levels'' of the endpoints in $G$. To prove that the graph constructed above indeed serves its purpose, we make the following observations. 

We show (in Lemma~\ref{lem:sipco-distort}) for some graphs $X$ and $Y$ where $X$ has bounded strong isometric path complexity, if there is a map from $V(X)$ to $V(Y)$ such that the endpoints of edges of $X$ remain close-by and the rooted paths are preserved upto an additive distortion, then bounded strong isometric path complexity of $X$ implies a similar property for all isometric paths of $X$. 

Lemma~\ref{lem:sipco-distort} reduces the problem to only analysing the distortion between ``rooted pairs'' in $G$ and $F$. Indeed, it is not difficult to prove using the properties of nicely, rooted graph partitions that the distances between any two ``rooted pairs'' of vertices of $G$ increase by a constant additive error in $F$. This constant is even independent of the strong isometric path complexity of $G$, but depends on the diameter of the partitions of $H$. 

However, it is slightly more tricky to prove the converse.
First, we show (in Lemma~\ref{lem:attach-ipco}) that since all graphs in $\langle H\rangle$ have bounded strong isometric path complexity, for any path $P$ of $F$, any component of $F-V(P)$ sees into at most $k$ vertices of $P$. 
Using the above observation, we show , that there is a mapping from $F$ to $G$ where distances between rooted pairs in $F$ increase (in $G$) by a linear function of the the diameter of the partitions of $H$ and strong isometric path complexity of $\langle H\rangle$. {
 Now we conclude the proof  again by using  Lemma~\ref{lem:sipco-distort}.
}
\paragraph*{A pleasant consequence} 
We observe that for any graph $G$ with no $K$-fat $K_{2,t}$-minor, the proof of Albrechtsen, Distel, and Georgakopoulos~\cite{albrechtsen2025excluding} already provides a  nicely rooted $R$-bounded $H$-partition for some $R$ where $H$ is $K_{2,t}$-minor-free. Now combining with results from Chakraborty and Foucaud~\cite{chakraborty2025strong}, we have the following theorem.

\begin{theorem}\label{thm:K2t}
There is a function $g:\mathbb{N}\times \mathbb{N}\rightarrow \mathbb{N}$ such that for any graph $G$ without any $K$-fat $K_{2,t}$-minor for $t\geq 3$, there is a $K_{2,t}$-minor-free graph to which $G$ admits a $(1,g(K,t))$-quasi-isometry. 
\end{theorem}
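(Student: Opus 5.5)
Theorem~\ref{thm:K2t} follows by combining Theorem~\ref{thm:additive} with two inputs, so the plan is to verify its three hypotheses for a graph $G$ with no $K$-fat $K_{2,t}$-minor.

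\textbf{Step 1: a good partition of $G$.} Following the construction of Albrechtsen, Distel, and Georgakopoulos~\cite{albrechtsen2025excluding}, we obtain an honest $R$-bounded $H$-partition of $G$ with $R=R(K,t)$ and $H$ a $K_{2,t}$-minor-free graph. We then check that this partition is nicely rooted in the sense of Definition~\ref{def:nice}. Their construction grows parts bottom-up from a root ball. Each new part is built inside a component of the complement of the previous parts and attaches to a previously built part, so heights in $H$ are monotone along the construction. What remains to verify is the extra condition on heights of nodes of $H$ that lie in the same cluster of the layering partition of $H$. I would do this by tracing how their parts are indexed by distance from the root in $G$. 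Since parts have bounded diameter, the height of a node in $H$ and the distance of its part from the root in $G$ agree up to a factor and additive term depending on $R$. If the definition requires exact equality of heights within a cluster, I would refine the partition by intersecting parts with distance annuli of width $R$ around the root. This keeps the partition honest and $R'$-bounded, and the quotient is still a minor of the original $H$, so it remains $K_{2,t}$-minor-free.

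\textbf{Step 2: bounded strong isometric path complexity.} By Chakraborty and Foucaud~\cite{chakraborty2025strong}, $K_{2,t}$-asymptotic-minor-free graphs have $\sipco{}$ bounded by some $k_1(K,t)$, which covers $G$. Every subdivision of $H$ is again $K_{2,t}$-minor-free, and a minor-free graph has no $K'$-fat minor for any $K'$ (in particular none for $K'=1$). So the same result gives $\sipco{\langle H\rangle}\le k_2(t)$. Set $k=\max(k_1,k_2)$.

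\textbf{Step 3: applying Theorem~\ref{thm:additive}.} The theorem yields a graph $F$ with $tw(F)\le tw(H)$ and a $(1,33Rk^2)$-quasi-isometry from $G$ to $F$. This is the main obstacle: the theorem only controls treewidth, whereas we need $F$ to be $K_{2,t}$-minor-free. I would avoid it by opening up the proof's construction rather than using the statement as a black box. There, $F$ is built from $H$ by replacing each edge of $H$ with a path and attaching pendant paths for the non-representative vertices. Thus $F$ is a subdivision of $H$ with trees hung on it, and the pendant parts contribute no new cycles, so $F$ has the same blocks as a subdivision of $H$ and is a minor of that subdivision plus pendant trees. $K_{2,t}$ is $2$-connected, so any $K_{2,t}$-minor of $F$ lives in a single block. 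Each such block is a subdivision of a block of $H$, which is $K_{2,t}$-minor-free, so $F$ is $K_{2,t}$-minor-free. Taking $g(K,t)=33R(K,t)\,k(K,t)^2$ completes the proof.
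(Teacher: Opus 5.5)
Your overall plan (verify the hypotheses of Theorem~\ref{thm:additive}, then open up the construction of $F$ as a subdivision of $H$ with pendant paths) is the paper's plan. However, two steps do not go through as written.

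\textbf{Subdivisions do not preserve $K_{2,t}$-minor-freeness.} In Steps 2 and 3 you use that every subdivision of a $K_{2,t}$-minor-free graph is again $K_{2,t}$-minor-free. You need this to bound $\sipco{\langle H\rangle}$ and to conclude that the blocks of $F$ are $K_{2,t}$-minor-free, but it is false. Let $H$ consist of an edge $ab$ together with $t-1$ internally disjoint $(a,b)$-paths of length $3$.
\begin{itemize}
\item A connected set avoiding $a,b$ has at most two outgoing edges. So the two hubs of a $K_{2,t}$-model contain $a$ and $b$ respectively, and the remaining branch sets must sit in interiors of distinct length-$3$ paths. Hence $H$ is $K_{2,t}$-minor-free.
\item $H$ is even bipartite with independent BFS layers from $a$, so it is compatible with Definition~\ref{def:nice}\ref{it:independent}.
\item Yet subdividing $ab$ once yields a subdivision of $K_{2,t}$. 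In $F$ the edge $ab$ is replaced by a path of length $\distance{G}{r}{r_b}-\distance{G}{r}{r_a}$, which in general exceeds $1$.
\end{itemize}
What you need is that $H$ is $\Theta_t$-minor-free. This is what \cite{albrechtsen2025excluding} actually provides: pass from no $K$-fat $K_{2,t}$ to no $3K$-fat $\Theta_t$, then use that $\Theta_t$ is $2$-connected. $\Theta_t$-minor-freeness is preserved under subdividing edges and implies $K_{2,t}$-minor-freeness. The paper uses exactly this, both to bound $\sipco{\langle H\rangle}$ and to turn a $K_{2,t}$-model in $F$ with hubs $X,Y$ into a $\Theta_t$-model in $H$ on the nodes whose representatives lie in $X$ and $Y$.

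\textbf{Condition (c) of Definition~\ref{def:nice} is not verified.} In Step 1, the height in (c) is $R_h=\max_{v\in V_h}\distance{G}{\downboundary{h}}{v}$, the thickness of $V_h$ above its bottom boundary. It is not the depth $\heightn{h}$; nodes of one cluster automatically share their depth. Exact equality is required, since Lemma~\ref{lem:down-bd-dst} and everything after it rely on it, so agreement ``up to a factor and additive term'' does not suffice. The annuli refinement also fails, for three reasons:
\begin{itemize}
\item The quotient of a refinement is not a minor of $H$. It is the other way round: $H$ is obtained from the new quotient by merging the pieces of each $V_h$. So minor-freeness is not inherited.
\item Levelness (b) and independence of layers (a) are lost.
\item Nothing forces equal heights within clusters of the new quotient.
\end{itemize}
The paper instead checks (c) directly on the construction of \cite{albrechtsen2025excluding}. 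The parts added at layer $n+1$ form $\bigcup_{D\in\mathcal{R}}\mathcal{P}_D$, and the height of every part in $\mathcal{P}_D$ depends only on $D$. Two nodes of $L^{n+1}$ arising from different classes $D_A\neq D_B$ are separated in $H-B_H(s,n)$. This is because a singleton class attaches to a single $V_x$ with $x\in L^n$, and otherwise the classes lie in distinct components of $G-G^{n-1}$. Hence nodes in one cluster come from the same $D$ and have equal heights.
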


\section{Preliminaries}\label{sec:prelim}

Let $G$ be a graph. For a set $X\subseteq V(G)$, $G-X$ is the graph induced by $V(G)\setminus X$.
 The length of a path $P$ is the number of edges in $P$. 
 For two vertices $u,v\in V(G)$, a $(u,v)$-path (resp. $(u,v)$-induced path) is a path (resp. an induced path) between $u$ and $v$. 
 A $(u,v)$-isometric path is a $(u,v)$-path with smallest possible length. 
 The \emph{distance} between two vertices $u,v\in V(G)$, denoted by $\distance{G}{u}{v}$, is the length of a $(u,v)$-isometric path in $G$. 
 For a set $S$ and a vertex $u$, the distance between $u$ and $S$ is $\distance{G}{u}{S}=\min\{\distance{G}{u}{v}\colon v\in S\}$.	
For a vertex $r\in V(G)$, an ordered pair $(u,v)$ with $u,v\in V(G)$, is an \emph{$r$-rooted pair} (resp. \emph{strongly $r$-rooted}) in $G$ if $u$ lies in some (resp. all) $(r,v)$-isometric path(s) of $G$. 

Given a set~$U$ of vertices of~$G$, the \emph{ball (in~$G$) around~$U$ of radius $r \in \mathbb{N}$}, denoted by~\emph{$B_G(U, r)$}, is the set of all vertices in~$G$ of distance at most~$r$ from~$U$ in~$G$.
If~$U = \{v\}$ for some~$v \in V(G)$, then the braces are omitted, writing~$B_G(v, r)$ instead of $B_G(\{v\}, r)$. 

The \emph{diameter} of $G$, denoted as \emph{$\diam(G)$}, is the smallest number~$k \in \mathbb{N} \cup \{\infty\}$ such that $d_G(u,v) \leq k$ for every two~$u,v \in V(G)$. If $G$ is empty, then we define its diameter to be~$0$.
We remark that if $G$ is disconnected but not the empty graph, then its diameter is $\infty$.
The \emph{diameter of a set $U\subseteq V(G)$ in $G$}, denoted by \emph{$\diam_G(U)$}, is the smallest number $k \in \mathbb{N}$ such that $d_G(u,v) \leq k$ for all $u,v \in U$ or $\infty$ if such a $k \in \mathbb{N}$ does not exist. If $Y$ is a subgraph of $G$, then we abbreviate $d_G(U,V(Y))$, $\diam_G(V(Y))$ and $B_G(V(Y),r)$ as $d_G(U,Y)$, $\diam_G(Y)$ and $B_G(Y,r)$, respectively.

	We recall the notion of \emph{layering partition}, introduced in \cite{brandstadt1999distance, chepoi2000note}. Let $G$ be a graph and $r$ be any vertex of $G$. A subset $S \subseteq V(G)\setminus \{r\}$ is a \emph{cluster w.r.t $r$} if $S$ is the maximal subset such that for any two vertices $u,v\in S$, (i) $d_G(r,u)=d_G(r,v)$ and (ii) $u,v$ lie in the same connected component in $G-B_G\left( r, k-1\right)$ where  $k=d_G(r,S)$. 
	
	Let $G$ be any graph. A chordless path $P$ of $G$ between two vertices $u$ and $v$ is a \emph{pendant} path if $u$ has degree $1$ in $G$, and every vertex in $V(P)\setminus \{u,v\}$ has degree $2$ in $G$. 
	
	For a graph $G$ and two vertices $u,v\in V(G)$, a set $X\subseteq V(G)$ is a \emph{$(u,v)$-separator} if $u$ and $v$ lie in different components of $G-X$.

A \emph{tree decomposition} of a graph $G$ is a pair $(T, (X_t)_{t \in
	V(T)})$, 
where $T$ is a rooted tree and $(X_t)_{t \in
	V(T)}$ is a family of subsets of $V(G)$ associating one subset
$X_t$, called \emph{bag}, to every $t \in V(T)$ such that:
\begin{itemize}
	\item[(i)] the set of nodes of $T$ containing a given vertex of $G$
	forms a nonempty connected subtree of $T$, and 
	\item[(ii)] any two
	adjacent vertices of $G$ appear together in a common node of $T$.
\end{itemize}

The \emph{width} of $T$ is the maximum cardinality of a bag minus one. The \emph{treewidth} of $G$, denoted as $tw(G)$, is the minimum integer $k$ such that $G$ has a tree decomposition of width $k$. 
\subsection{Asymptotic minors, quasi-isometry, and graph partitions}

For a positive integer $K$ and a graph $H$, a \emph{$K$-fat minor model} of $H$ in a graph $G$ is a collection $\mathcal{M}=\left(B_v\colon v\in V(H)\right) \cup \left(P_e\colon e\in E(H)\right)$ of connected subgraphs of $G$ such that \begin{itemize}
	\item $V(B_v) \cap V(P_e)\neq \emptyset$ whenever $v$ is an end-vertex of $e$ in $H$;
	\item for any pair of distinct $X,Y\in \mathcal{M}$ not covered by the above condition, we have $\distance{G}{X}{Y} \geq K$. 
\end{itemize}  

The subgraphs in $(B_v\colon v\in V(H))$ are the ``branch sets'' of the model. The subgraphs in $(P_e\colon e\in E(H))$ are the ``branch paths'' of the model. If $G$ contains a $K$-fat minor model of a graph $H$, then we say that $H$ is a $K$-fat minor of $G$. Note that, for $e\in E(H)$, we can assume that the connected subgraphs $P_e$ are indeed paths.

\begin{definition}[\cite{georgakopoulos2025graph}]\label{def:asym}
	A graph class $\cal G$ contains $H$ as an \emph{asymptotic minor} if for every integer $K\geq 1$, there exists a graph $G\in \cal G$, such that $G$ contains $H$ as a $K$-fat minor.
\end{definition}

We say that {a graph class} $\mathcal{G}$ is \emph{$H$-asymptotic minor-free} if $\cal G$ does not contain $H$ as an asymptotic minor. In other words, $\cal G$ is $H$-asymptotic minor-free if there exists a constant $K$, depending only on $\mathcal{G}$, such that no graph in $\mathcal{G}$ contains a $K$-fat minor model of $H$.

\begin{definition}[\cite{georgakopoulos2025graph}] \label{def:quasi}
	\sloppy An \emph{$(M,A)$-quasi-isometry} between graphs $G$ and $H$ is a map $f\colon V(G)\rightarrow V(H)$ such that the following holds for fixed constants $M\geq 1, A\geq 0$, \begin{enumerate*}[label=(\alph*)]
		\item $M^{-1}\dist{x}{y}-A  \leq \dist{f(x)}{f(y)} \leq M\dist{x}{y}+A$ for every $x,y\in V(G)$; and \item for every $z\in V(H)$ there is $x\in V(G)$ such that $\dist{z}{f(x)}\leq A$. 
	\end{enumerate*}
	The graphs $G$ and $H$ are \emph{quasi-isometric} if there exists a quasi-isometric map $f$ between $V(G)$ and $V(H)$. 
\end{definition}

We highlight the definition of quasi-isometry with additive distortion. 

\begin{definition}\label{def:add}
	A class $\mathcal{G}$ of graphs admits a \emph{quasi-isometry with additive distortion} to a class $\mathcal{H}$ if there exists a constant $A$  (depending only on $\mathcal{G}$ and $\mathcal{H}$) such that any $G\in \mathcal{G}$ admits a $(1,A)$-quasi-isometry to a graph in $\mathcal{H}$.
\end{definition}

A \emph{graph-partition of}~$G$ \emph{over}~$H$, or \emph{$H$-partition} for short, is a partition $\mathcal{H} := (V_h : h \in V(H))$ of $V(G)$ indexed by the nodes of $H$ such that for every edge $uv \in E(G)$, if $u \in V_{g}$ and $v \in V_h$, then $g = h$ or $gh \in E(H)$. We say $\mathcal{H}$ is \emph{honest}, if $V_h$ is non-empty for all $h \in V(H)$ and if for every edge $gh \in E(H)$ there exists an edge $uv \in E(G)$ such that $u \in V_g$ and $v \in V_h$. 
We say that $\mathcal{H}$ is \emph{$R$-bounded}, if each $V_h$ has diameter at most~$R$.

\begin{lemma}[\cite{albrechtsen2025excluding}] \label{lem part QI}
	Let $H, G$ be graphs, and let $\mathcal{H}$ be an honest, $R$-bounded $H$-partition  of $G$ for some $R\in \mathbb{R}$. Then $G$ is $(R + 1, R/(R+1))$-quasi-isometric to~$H$. 
\end{lemma}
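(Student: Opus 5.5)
We prove the statement (Lemma~\ref{lem part QI}) with the map $f\colon V(G)\to V(H)$ sending each vertex $v$ to the unique node $h$ with $v\in V_h$. This is well defined because $\mathcal{H}$ partitions $V(G)$, and it is surjective because honesty makes every $V_h$ nonempty. Surjectivity gives condition (b) of Definition~\ref{def:quasi} with distance $0\le A$. It remains to establish the two distance inequalities for all $x,y\in V(G)$:
\[
d_H(f(x),f(y))\le d_G(x,y)\quad\text{and}\quad d_G(x,y)\le (R+1)\,d_H(f(x),f(y))+R .
\]

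\textbf{Upper bound.} Take a $(x,y)$-isometric path $x=v_0,v_1,\dots,v_m=y$ in $G$, so $m=d_G(x,y)$. By the definition of an $H$-partition, for each $i$ either $f(v_i)=f(v_{i+1})$ or $f(v_i)f(v_{i+1})\in E(H)$. Hence the sequence $f(v_0),\dots,f(v_m)$, after deleting consecutive repetitions, is a walk in $H$ of length at most $m$. This gives $d_H(f(x),f(y))\le d_G(x,y)\le (R+1)\,d_G(x,y)+R/(R+1)$.

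\textbf{Lower bound.} Let $n=d_H(f(x),f(y))$ and let $f(x)=h_0,h_1,\dots,h_n=f(y)$ be a geodesic in $H$. By honesty, for each $i<n$ there is an edge $u_iw_{i+1}\in E(G)$ with $u_i\in V_{h_i}$ and $w_{i+1}\in V_{h_{i+1}}$. Set $w_0=x$ and $u_n=y$. Since each $V_{h_i}$ has diameter at most $R$ in $G$, we have $d_G(w_i,u_i)\le R$. Chaining these pieces gives
\[
d_G(x,y)\le \sum_{i=0}^{n}d_G(w_i,u_i)+n\le (n+1)R+n=(R+1)n+R .
\]

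\textbf{Matching the stated constants.} The bound needed is $(R+1)^{-1}d_G(x,y)-R/(R+1)\le d_H(f(x),f(y))$. Multiplying by $R+1$, this is $d_G(x,y)\le (R+1)\,d_H(f(x),f(y))+R$, which is exactly the lower bound above. So $f$ is an $(R+1,R/(R+1))$-quasi-isometry.

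The only step needing care is this constant bookkeeping: the additive $R$ must be absorbed by dividing through by $M=R+1$. The remaining steps are routine consequences of the definitions.
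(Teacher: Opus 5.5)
Your proof is correct: the part map is $1$-Lipschitz because each edge of $G$ either collapses or maps to an edge of $H$, and honesty together with $R$-boundedness gives $d_G(x,y)\le (R+1)\,d_H(f(x),f(y))+R$, which is the required lower bound after dividing by $R+1$. The paper does not prove this lemma itself; it quotes it from Albrechtsen, Distel, and Georgakopoulos, and your argument is the standard one behind that result.
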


A \emph{rooted} graph is a pair $(H,s)$ where $H$ is a graph and $s$ is one of its vertices, called its \emph{root}. We will sometimes omit $s$ from the notation if it is clear from the context. A rooted graph $(H,s)$ has a natural layering: we denote by $L^i = L^i_{H,s} := \{h \in V(H) : d_H(s,h) = i\}$ the \emph{$i$-th layer} of $H$. Given a vertex $h \in V(H)$ we denote by $\heightn h = \heightn {h,s}$ the unique integer satisfying $h \in L^{\heightn h}$. 

Let $\mathcal{H}= (H, (V_h)_{h \in V(H)})$ be a graph-partition of a graph~$G$ over a graph~$H$ which is also rooted at a node $s$. For every $n \in \mathbb{N}$, let $G^n=G^n_{\mathcal{H}}$  denote the  subgraph of~$G$ induced by those vertices that are contained in partition classes~$V_h$ of nodes $h$ in the layers of $H$ up to $L^n$, i.e.\ $G^n := G[\bigcup_{i \leq n} \bigcup_{h \in L^i} V_h]$. For a node $h\neq s$ of $H$, we let \emph{$\delta^\downarrow_h$} (resp. \emph{$\upboundary{h}$}) be the set of vertices of $V_h$ that are adjacent to some vertex of $G^{\heightn h-1}$ (resp. $G^{\heightn h+1} - G^{\heightn h}$). The \emph{height $R_h$} of a node $h$ of $H$ is the maximum distance 
$\max_{v\in V_h} d_G\left(\delta^\downarrow_h,v\right)$ of vertices from its `bottom' $\delta^\downarrow_h$. We say that $V_h$ is \emph{level}, if  $V_h = B_{G-G^{ \heightn h-1}}\left(\downboundary{h}, R_h\right)$.

\begin{definition}\label{def:nice}
	For an integer $R$, let $\mathcal{H}= (H, (V_h)_{h \in V(H)})$ be an $R$-bounded honest graph partition of a graph~$G$ over a graph~$H$ which is also rooted at a node $s$. We say $\mathcal{H}$ is ``\emph{nicely rooted}'' if 
	\begin{enumerate}[label=(\alph*),noitemsep,topsep=0pt]
		\item\label{it:independent} for all $j\in \mathbb{N}$, the layer $L^j$ is an independent set,
		
		\item\label{it:level} there exists a vertex $r\in V_s$ such that $V_s=B_G(r,R_s)$ with $R_s\le R$, and for every node $h\neq s$, $V_h$ is level,
		
		\item\label{it:same-height} the heights of two nodes $ x,y\in V(H)$ lying in the same cluster w.r.t $s$ are the same. 
	\end{enumerate}
\end{definition}

\begin{lemma}\label{lem:down-bd-dst}
	For an integer $R$, let $\mathcal{H}= (H, (V_h)_{h \in V(H)})$ be an $R$-bounded nicely rooted honest graph-partition of a graph~$G$ over a graph~$H$ which is also rooted at a node $s$. Let $r\in V_s$ be a vertex such that $V_s=B_G(r,R_s)$. Let $g$ and $h$ be two nodes of $H$ that lie in the same cluster of $H$ w.r.t $s$. For any $w\in \downboundary{h}, x\in \downboundary{g}$, $\distance{G}{r}{w} = \distance{G}{r}{x}$. Moreover, for any $y\in \upboundary{h}, z\in \upboundary{g}$, $\distance{G}{r}{y} = \distance{G}{r}{z}$.
\end{lemma}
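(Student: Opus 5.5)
The plan is to prove a stronger statement by induction on the layer index $i\geq 1$. For every cluster $C$ of $H$ w.r.t. $s$ contained in $L^i$ there should be an integer $D_C$ such that:
\begin{enumerate}[label=(\roman*)]
\item every $w\in\downboundary{h}$ with $h\in C$ satisfies $\distance{G}{r}{w}=D_C$;
\item every $v\in V_h$ with $h\in C$ satisfies $\distance{G}{r}{v}\geq D_C+\distance{G-G^{i-1}}{\downboundary{h}}{v}$, with equality when $v\in\upboundary{h}$;
\item every $v\in\upboundary{h}$ has $\distance{G-G^{i-1}}{\downboundary{h}}{v}=R_h$.
\end{enumerate}
Since all nodes of $C$ have the same height $R_C$ by Definition~\ref{def:nice}\ref{it:same-height}, (i)--(iii) give $\distance{G}{r}{y}=D_C+R_C$ for every $y\in\upboundary{h}$, $h\in C$. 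This is exactly the statement of Lemma~\ref{lem:down-bd-dst}.

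First I will record some structural facts.
\begin{itemize}
\item Every edge of $G$ with endpoints in different parts joins parts of nodes in consecutive layers. Indeed, edges of $H$ join layers differing by at most one, and each layer $L^j$ is independent by \ref{it:independent}. Hence any edge leaving $G^{i-1}$ goes from some $V_g$ with $g\in L^{i-1}$ to some $V_h$ with $h\in L^i$. Its endpoints then lie in $\upboundary{g}$ and $\downboundary{h}$.
\item Claim (iii) follows from levelness \ref{it:level}. A vertex $v\in V_h$ at distance less than $R_h$ from $\downboundary{h}$ in $G-G^{i-1}$ cannot have a neighbour $u$ in $G^{i+1}-G^{i}$. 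Otherwise $u\notin G^{i-1}$ and $u$ would be within distance $R_h$ of $\downboundary{h}$ in $G-G^{i-1}$, so $u\in V_h$, a contradiction. Moreover $\upboundary{h}\subseteq V_h$ forces distance at most $R_h$.
\item I also use the standard layering fact: if $h,h'\in C$ and $g,g'\in L^{i-1}$ are neighbours of $h,h'$ respectively, then $g,g'$ lie in a common cluster $C'$. Indeed, $h$ and $h'$ are joined by a path avoiding $B_H(s,i-1)$, and adding $g,g'$ gives a path avoiding $B_H(s,i-2)$.
\end{itemize}

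The base case is $i=1$. Since $V_s=B_G(r,R_s)$, every vertex adjacent to $V_s$ but outside it is at distance exactly $R_s+1$ from $r$. So $D_C=R_s+1$ for all clusters in $L^1$, which gives (i).

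For the inductive step, let $w\in\downboundary{h}$ with $h\in C\subseteq L^i$ and $i\geq 2$. The vertex $w$ has a neighbour $u\in\upboundary{g}$ for some $g\in L^{i-1}$, and $g$ lies in the parent cluster $C'$. By induction $\distance{G}{r}{u}=D_{C'}+R_{C'}$, so $\distance{G}{r}{w}\leq D_{C'}+R_{C'}+1$. For the reverse inequality, take a shortest $(r,w)$-path and its last vertex $u'$ in $G^{i-1}$. By the first structural fact, $u'\in\upboundary{g'}$ for some $g'\in L^{i-1}$, and the path continues into $\downboundary{h'}$ for some $h'\in L^i$ and then stays in $G-G^{i-1}$ until $w$. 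Hence $h'$ and $h$ are in the same cluster $C$ (the segment is connected outside $G^{i-1}$, so its nodes are connected in $H-B_H(s,i-1)$), and $g'\in C'$. Therefore $\distance{G}{r}{w}\geq D_{C'}+R_{C'}+1$, and we set $D_C:=D_{C'}+R_{C'}+1$.

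Claim (ii) is proved the same way. Any shortest $(r,v)$-path enters $G-G^{i-1}$ at a vertex $w'\in\downboundary{h'}$ with $h'\in C$, which is at distance $D_C$ from $r$. The subsequent segment is a path in $G-G^{i-1}$ from $w'$ to $v$.

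The main obstacle is this last segment, because it may wander through parts $V_{h''}$ with $h''\ne h$, possibly in higher layers, before reaching $v$. I then need $\distance{G-G^{i-1}}{\downboundary{h}}{v}$ to be at most the length of that segment. I would first try to show, again via levelness, that for $v\in V_h$ the distance in $G-G^{i-1}$ from $\bigcup_{h'\in C}\downboundary{h'}$ to $v$ is attained inside $V_h$. The idea is that any path from another $\downboundary{h'}$ must enter $V_h$ through a vertex of $\downboundary{h}$ or via a higher layer. In the latter case it passes an up-boundary vertex, which is already at distance $R_{h'}=R_h$, so the path is no shorter. The equality in (ii) for $v\in\upboundary{h}$ then follows from the upper bound via a shortest path inside $V_h$ from $\downboundary{h}$.
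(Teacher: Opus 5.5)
Your proposal is correct and follows essentially the same route as the paper: induction on the layer, with the base case coming from $V_s=B_G(r,R_s)$, and the inductive step passing through up-boundary vertices of nodes in the parent cluster, using $d_G(r,y)=d_G(r,\downboundary{g})+R_g$ for $y\in\upboundary{g}$. Your invariants (ii)--(iii) and the explicit lower-bound arguments make rigorous two steps the paper only asserts, namely $d_G(r,w)=d_G(r,w')+1$ and that every $(r,y)$-isometric path meets $\downboundary{g}$. The first uses the last vertex of a shortest path in $G^{i-1}$; the second uses the fact that, by levelness, any detour through another part $V_{h'}$ of the same cluster already costs at least $R_{h'}=R_h$.
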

\begin{proof}
	We prove the lemma by induction on the layer that contains $g$ and $h$. Assume $g,h\in L^1$. For two vertices $w\in \downboundary{g}, x\in \downboundary{h}$, there exist $w',x'\in V_s$ such that $\{ww',xx'\}\subseteq E(G)$. 
	By Definition~\ref{def:nice}\ref{it:level}, $\distance{G}{r}{w'} = \distance{G}{r}{x'}=R_s$.  Hence, $\distance{G}{r}{w} = \distance{G}{r}{w'}+1 = R_s+1$ and $\distance{G}{r}{x} = \distance{G}{r}{x'}+1 = R_s+1$. Hence, $\distance{G}{r}{w} = \distance{G}{r}{x}$. 
	Suppose $y\in \upboundary{g},z\in \upboundary{h}$. As any $(r,y)$-isometric path contains a vertex of $\downboundary{g}$ and by the above arguments, all vertices of $\downboundary{g}$ are equidistant from $r$ in $G$, it follows that $\distance{G}{r}{y} = \distance{G}{r}{\downboundary{g}} + R_g$. Similarly,  $\distance{G}{r}{z} = \distance{G}{r}{\downboundary{h}} + R_h$. 
	Since $g$ and $h$ lie in the same cluster w.r.t $s$, $R_h=R_g$ (due to Definition~\ref{def:nice}\ref{it:same-height}). By the above arguments $\distance{G}{r}{\downboundary{g}} = \distance{G}{r}{\downboundary{h}} = R_s+1$. Hence,   $\distance{G}{r}{y} = \distance{G}{r}{z} = R_s+1+R_h$.   
	
	For an integer $k\geq 1$, assume the statement is true for all pairs of nodes that lie in the same cluster w.r.t $s$ and lie in $L^i$ with $i\leq k$. Now consider two nodes $g$ and $h$ that lie in the same cluster w.r.t $s$ and $g,h\in L^{k+1}$. Let $w\in \downboundary{g},x\in \downboundary{h}$ and let $g',h'\in L^k$ be nodes such that there exist $w'\in V_{g'}, x'\in V_{h'}$ with $ww'\in E(G)$ and $xx'\in E(G)$. Observe that, $g'$ and $h'$ lie in the same cluster w.r.t $s$ and $w'\in \upboundary{g'}, x'\in \upboundary{h'}$. Hence, $\distance{G}{r}{w'}=\distance{G}{r}{x'}$. Since $\distance{G}{r}{w} = \distance{G}{r}{w'}+1$ and $\distance{G}{r}{x} = \distance{G}{r}{x'}+1$, we have that $\distance{G}{r}{w}=\distance{G}{r}{x}$. 
	
	For the second part, consider two vertices $y\in \upboundary{g}, z\in \upboundary{h}$. As any $(r,y)$-isometric path contains a vertex of $\downboundary{g}$ and by the above arguments, all vertices of $\downboundary{g}$ are equidistant from $r$ in $G$, it follows that $\distance{G}{r}{y} = \distance{G}{r}{\downboundary{g}} + R_g$. Similarly,  $\distance{G}{r}{z} = \distance{G}{r}{\downboundary{h}} + R_h$. 
	Since $g$ and $h$ lie in the same cluster w.r.t $s$, $R_h=R_g$ (due to Definition~\ref{def:nice}\ref{it:same-height}) and $\distance{G}{r}{\downboundary{g}} = \distance{G}{r}{\downboundary{h}}$.
\end{proof}

\begin{lemma}\label{lem:dst-incerase}
	For an integer $R$, let $\mathcal{H}= (H, (V_h)_{h \in V(H)})$ be an $R$-bounded nicely rooted honest graph-partition of a graph~$G$ over a graph~$H$ which is also rooted at a node $s$. Let $r\in V_s$ be a vertex such that $V_s=B_G(r,R_s)$. For $gh\in E(H)$ with $\heightn g=\heightn h-1$ and $u \in V_g, v\in V_h$, $\distance{G}{r}{u} < \distance{G}{r}{v}$.
\end{lemma}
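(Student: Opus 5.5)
The plan is to attach to every node $h\neq s$ of $H$ the number $D_h := \distance{G}{r}{\downboundary{h}}$, and for $s$ to set $D_s:=0$. By Lemma~\ref{lem:down-bd-dst}, $D_h$ is the common distance from $r$ of all vertices of $\downboundary{h}$, and it depends only on the cluster of $h$ w.r.t.\ $s$. I will then show that every vertex of $V_h$ has distance from $r$ in the window $[D_h, D_h+R_h]$. Finally I will show that $D_h = D_g+R_g+1$ whenever $gh\in E(H)$ with $\heightn g=\heightn h-1$. Together these give $\distance{G}{r}{u}\le D_g+R_g < D_h\le \distance{G}{r}{v}$, which is the claim.

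\emph{Upper end of the window.} Since $V_h$ is level, every $v\in V_h$ is joined to some vertex of $\downboundary{h}$ by a path of length at most $R_h$. Hence $\distance{G}{r}{v}\le D_h+R_h$. For $h=s$ this reads $\distance{G}{r}{v}\le R_s$, because $V_s=B_G(r,R_s)$.

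\emph{Lower end of the window.} Let $h\neq s$, let $v\in V_h$, and take an $(r,v)$-isometric path $P$. Let $y$ be the last vertex of $P$ in $G^{\heightn h-1}$ and $w$ its successor. Every edge of $G$ joins equal or adjacent nodes, and the layers are independent by Definition~\ref{def:nice}\ref{it:independent}. Hence $w\in\downboundary{h'}$ for some $h'\in L^{\heightn h}$. The subpath of $P$ from $w$ to $v$ avoids $G^{\heightn h-1}$, so its vertices lie in parts whose nodes avoid $B_H(s,\heightn h-1)$. Consequently $h'$ and $h$ lie in the same cluster, and $\distance{G}{r}{v}\ge \distance{G}{r}{w}=D_{h'}=D_h$.

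\emph{The key identity.} I first record that every $y\in\upboundary{g}$ satisfies $\distance{G}{r}{y}=D_g+R_g$; this is the fact already used inside the proof of Lemma~\ref{lem:down-bd-dst}. The reason is levelness: if $y$ were at distance less than $R_g$ from $\downboundary{g}$ in $G-G^{\heightn g-1}$, then its upper neighbour would lie in $B_{G-G^{\heightn g-1}}(\downboundary{g},R_g)=V_g$, which is impossible. For $g=s$ the same argument uses $V_s=B_G(r,R_s)$.

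Now apply the lower-end argument to any $v\in V_h$. The vertex $y$ lies in $\upboundary{g''}$ for some $g''\in L^{\heightn h-1}$ adjacent in $H$ to $h'$. Since $h'$ and $h$ are connected in $H$ above layer $\heightn h-1$, the nodes $g''$ and $g$ are in the same cluster. Therefore $D_{g''}=D_g$ by Lemma~\ref{lem:down-bd-dst}, and $R_{g''}=R_g$ by Definition~\ref{def:nice}\ref{it:same-height}. This gives
\[
\distance{G}{r}{v}\ge \distance{G}{r}{y}+1=D_g+R_g+1 .
\]
Together with the upper end for $u\in V_g$, this yields the strict inequality.

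I expect the main obstacle to be the careful bookkeeping of clusters. One has to verify precisely that the nodes $h'$ and $g''$ met along an arbitrary geodesic fall in the same clusters as $h$ and $g$, respectively. One also has to handle the base case $g=s$, which is not covered by Lemma~\ref{lem:down-bd-dst} and has to be treated separately using $V_s=B_G(r,R_s)$.
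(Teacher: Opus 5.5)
Your proof is correct and is essentially the paper's argument, written more carefully. In both, an $(r,v)$-geodesic must cross the up-boundary of a node of layer $\heightn g$ lying in the cluster of $g$, and Lemma~\ref{lem:down-bd-dst} plus the identity $\distance{G}{r}{y}=D_g+R_g$ for $y\in\upboundary{g}$ then finish; the paper asserts that identity in the proof of Lemma~\ref{lem:down-bd-dst} and uses it implicitly in its step $\distance{G}{r}{u}\le\distance{G}{r}{w_2}$. Your choice of $y$ as the last vertex of $P$ in $G^{\heightn h-1}$ is exactly what justifies the paper's unargued claim that this node lies in the cluster of $g$.

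The one spot to tighten is your justification of that identity. Levelness only shows that $y$ is at distance $R_g$ from $\downboundary{g}$ inside $G-G^{\heightn g-1}$, not that $\distance{G}{r}{y}\ge D_g+R_g$. To close this, note that the tail of an $(r,y)$-geodesic after its last vertex in $G^{\heightn g-1}$ starts in some $\downboundary{g'}$ with $g'$ in the cluster of $g$. If $g'=g$, levelness applies directly. Otherwise, because $L^{\heightn g}$ is independent, the tail must leave $V_{g'}$ through $\upboundary{g'}$, which already costs $R_{g'}=R_g$ steps.
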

\begin{proof}
 Let $P$ be an $(r,v)$-isometric path in $G$. 
	Let $X=\displaystyle\bigcup\limits_{g'\in L^{\heightn g}} \upboundary{g'}$ and by definition (of nicely rooted graph partitions), $X$ is an $(r,v)$-separator in $G$. 
	Hence, there exists a node $g'$ in $L^{\heightn g}$ such that $P$ contains a vertex $w_1\in \upboundary{g'}$. Observe that $\distance{G}{r}{v} > \distance{G}{r}{w_1}$. Moreover,  $g,g'$ lie in the same cluster w.r.t $s$ in $H$. By Lemma~\ref{lem:down-bd-dst}, for a vertex $w_2 \in \upboundary{g}$, $\distance{G}{r}{w_1} = \distance{G}{r}{w_2}$. Now observe that $\distance{G}{r}{u} \leq \distance{G}{r}{w_2} = \distance{G}{r}{w_1} < \distance{G}{r}{v}$. 
\end{proof}

For an isometric path in $G$, we define its \emph{trace} in $H$ as follows. 

\begin{definition}
	For an integer $R$, let $\mathcal{H}= (H, (V_h)_{h \in V(H)})$ be an $R$-bounded honest graph partition of a graph $G$ over a graph $H$. For $u,v\in V(G)$ and a $(u,v)$-isometric path $P$ in $G$, $\trc{H}{P}$ is a sequence  $\left(g_1,g_2,\ldots,g_k\right)$ of distinct elements such that for $i\in [k], g_i\in V(H)$, $u\in V_{g_1}, v\in V_{g_k}$, and for $i\in [k-1]$, there is an edge $e_i=u_iv_i\in E(G)$ such that $u_i\in V_{g_i}\cap V(P), v_i\in V_{g_{i+1}}\cap V(P)$.
\end{definition}

In the next lemma we show that, when a graph $G$ has a nicely rooted honest graph partition over a graph $H$, the trace of an ``$r$-rooted isometric path'' in $G$ is an ``$s$-rooted isometric path'' in $H$.

 \begin{lemma}\label{lem:trace}
 	For an integer $R$, let $\mathcal{H}= (H, (V_h)_{h \in V(H)})$ be an $R$-bounded nicely rooted honest graph-partition of a graph~$G$ over a graph~$H$ which is also rooted at a node $s$. Let $r\in V_s$ be a vertex such that $V_s=B_G(r,R_s)$. 
 	\begin{enumerate}[label=(\alph*)]
 		\item\label{it:trace} For any $u\in V(G), h\in V(H), u\in V_h$ and an $(r,u)$-isometric path $P$ in $G$, $\trc{H}{P}$ is an $(s,h)$-isometric path in $H$.
 		
 		\item\label{it:rooted-pair-preserve} Let $(x,y)$ be an $r$-rooted pair in $G$ and $g,h\in V(H)$ such that $x\in V_g, y\in V_h$. Then $(g,h)$ is an $s$-rooted pair in $H$.
 	\end{enumerate}
 	\end{lemma}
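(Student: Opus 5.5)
For part \ref{it:trace}, I will first read off $\trc{H}{P}$ by walking along $P$ from $r$ to $u$ and recording the partition classes visited. I will use the fact that the visited nodes have non-decreasing layer index. The starting point is that every vertex of $V_s=B_G(r,R_s)$ lies at distance at most $R_s$ from $r$.

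Next, I will show that once $P$ leaves $V_g$ for a node $g'$ with $\heightn{g'}=\heightn g-1$, it can never move downward. Let $w\in V_g$ and $w'\in V_{g'}$ be consecutive vertices of $P$ with $gg'\in E(H)$ and $\heightn{g'}=\heightn g-1$. By Lemma~\ref{lem:dst-incerase}, $\distance{G}{r}{w'}<\distance{G}{r}{w}$. This is impossible, because along an $(r,u)$-isometric path the distance from $r$ strictly increases at each step.

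Moves between two distinct nodes of the same layer cannot occur either. Such a step would use an edge of $G$ between $V_g$ and $V_{g''}$ with $\heightn g=\heightn{g''}$. The partition condition would then force $gg''\in E(H)$, contradicting Definition~\ref{def:nice}\ref{it:independent}.

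It follows that each transition between different classes along $P$ goes from layer $i$ to layer $i+1$. Since the layer strictly increases at every transition, no class is revisited, so the recorded nodes are distinct and the trace is well defined. It starts at $s$, because $r\in V_s$, and ends at $h$. The sequence forms a path in $H$ whose length equals $\heightn h=\distance{H}{s}{h}$, so it is an $(s,h)$-isometric path.

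For part \ref{it:rooted-pair-preserve}, take an $(r,y)$-isometric path $P$ that passes through $x$. Its subpath from $r$ to $x$ is an $(r,x)$-isometric path. By part \ref{it:trace}, $\trc{H}{P}$ is an $(s,h)$-isometric path in $H$, and it contains $g$ because $x\in V_g\cap V(P)$. Hence $g$ lies on an $(s,h)$-isometric path, which means $(g,h)$ is an $s$-rooted pair.

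The main obstacle is excluding same-layer and downward moves along $P$, which rests on Lemma~\ref{lem:dst-incerase} and item \ref{it:independent}. Everything after that is bookkeeping.
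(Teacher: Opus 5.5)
Your proof is correct, but it is organised differently from the paper's. The paper works in two separate steps. First it shows, by contradiction, that each $V_a\cap V(P)$ induces a subpath of $P$, so that the trace is well defined. That argument uses a second node $c\in L^{\heightn a}$ lying in the same cluster as $a$, together with Lemma~\ref{lem:down-bd-dst}. Only afterwards does it use Lemma~\ref{lem:dst-incerase}, with independence of the layers used implicitly, to show that consecutive trace nodes go up exactly one layer.

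You instead apply the Lemma~\ref{lem:dst-incerase} argument directly to every edge of $P$ that joins two different classes, before the trace is even defined. Contiguity and distinctness then follow at once, because the layer index is non-decreasing along $P$ and strictly increases at every class change. This makes the paper's first claim, with its cluster argument, unnecessary. Part~\ref{it:trace} then rests only on Lemma~\ref{lem:dst-incerase}, Definition~\ref{def:nice}\ref{it:independent}, and one basic fact that you use without stating: adjacent nodes of $H$ lie in layers differing by at most one. You should state it, since it is what rules out jumps of two or more layers at a transition.

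The paper's route isolates contiguity as a standalone fact, but by a longer path. Yours is shorter and avoids the same-cluster reasoning altogether. Your part~\ref{it:rooted-pair-preserve} is the same as the paper's.
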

 	
 \begin{proof}
 	First we prove \ref{it:trace}. Let $P$ be an $(r,u)$-isometric path in $G$. We prove that
 		for any node $a\in V(H)$, $V_a\cap V(P)$ induces a subpath of $P$. 
 	
 		To prove the above claim, suppose there exists a node $a\in V(H)$ such that the graph $F$ induced by $V_a\cap V(P)$ has at least two components. For distinct connected components $C,C'$ of $F$, let $P(C,C')$ be the minimal subpath of $P$ that has vertices adjacent to both $C$ and $C'$. 
 		Let $C_1$ and $C_2$ be two components of $F$ such that none of the vertices of $P(C_1,C_2)$ lie in $V_a$. 
 		Without loss of generality, assume the vertices of $C_1$ are closer to $r$ in $G$ than those of $C_2$. 
 		In other words, for $u\in V(C_1), v\in V(C_2)$, $\distance{G}{r}{u}<\distance{G}{r}{v}$. Let $Q=P(C_1,C_2)$. For $i\in \{1,2\}$, let 
 		$f_i=u_iv_i\in E(G)$ such that $u_i\in V(C_i)$ and $v_i\in V(Q)$.  Observe that $\distance{G}{r}{u_1} < \distance{G}{r}{w}$ for all $w\in V(Q)$. 
 		
 		If $v_1$ lies in a node $b\in V(H)$ where $\heightn {b}=\heightn a-1$, then by Lemma~\ref{lem:dst-incerase}, $\distance{G}{r}{v_1} < \distance{G}{r}{u_1}$, which is a contradiction. Hence, $v_1$ lies in a node $b\in V(H)$ where $\heightn {b}=\heightn a+1$. Observe that $u_1\in \upboundary{a}$.
 		Let $w\in V(Q)$ be the vertex which is closest to $u_1$ in $G$ and is adjacent to some vertex $w_1$ where $w_1\in V_{c}\setminus \{u_1\}, c\in L^{\heightn a}$. 
 		Since $Q$ is a connected  subgraph which has vertices adjacent to both $C_1$ and $C_2$,  $w$ always exists. Observe that $w_1\in \upboundary{c}$. 
 		Moreover, $a$ and $c$ lie in the same cluster w.r.t $s$ in $H$. Hence, by Lemma~\ref{lem:dst-incerase} and Lemma~\ref{lem:down-bd-dst}, $\distance{G}{r}{u_1}=\distance{G}{r}{w_1}$, which is a contradiction. 
 
 	\medskip\noindent Let $\trc{H}{P}=(a_1,a_2,\ldots,a_k)$. Since for $i\in [k]$, $V_{a_i}\cap V(P)$ induces a subpath of $P$,  observe that, for $j\in [2,k]$, $a_j$ is the unique node of $H$ that contains the endpoint of the path induced by $V(P)\setminus \{V_{a_1}\cup \ldots \cup V_{a_{j-1}}\}$ which is also adjacent to a vertex of $V_{a_{j-1}}$. Next,  we prove that 
 		for $j\in [k-1]$,  $\distance{H}{s}{a_{j+1}} = \distance{H}{s}{a_j}+1$. 
 
 	 To prove the above claim, suppose for some $j\in [k-1]$, $\distance{H}{s}{a_{j+1}}\leq \distance{H}{s}{a_j}$. Let $w_1w_2\in E(P)$ be the unique edge such that $w_1\in V_{a_j}\cap V(P)$ and $w_2\in V_{a_{j+1}}\cap V(P)$. Clearly, $\distance{H}{s}{a_{j+1}}=\distance{H}{s}{a_{j}}-1$ and $\distance{G}{r}{w_1} < \distance{G}{r}{w_2} $. But this contradicts Lemma~\ref{lem:dst-incerase}. 
 	Hence, $\trc{H}{P}$ is an $(s,h)$-isometric path.
 	
 	\medskip \noindent To prove \ref{it:rooted-pair-preserve}, let $P$ be an $(r,y)$-isometric path in $G$ that contains $x$. By \ref{it:trace}, $Q=\trc{H}{P}$ is an $(s,h)$-isometric path in $H$. Since $x\in V(P)$, $g\in V(Q)$. Hence $(g,h)$ is an $s$-rooted pair in $H$.
 \end{proof}

\subsection{Strong isometric path complexity and additive quasi-isometry}

Given a graph $G$ and a vertex $r$ of $G$, a set $S$ of isometric paths of $G$ is \emph{$r$-rooted} if $r$ is one of the end-vertices of all the isometric paths in $S$. A path $P$ is \emph{covered} by a set $\mathcal{Q}$ of paths if $V(P)\subseteq \displaystyle\bigcup\limits_{Q\in \mathcal{Q}} V(Q)$. For a graph $G$ and a vertex $r$, $\ipco{r,G}$ is the minimum integer $k$ such that  any isometric path $P$ of $G$ can be covered by a set of $k$ many $r$-rooted paths. 

\begin{definition}[\cite{chakraborty2026isometric,  chakraborty2025strong}]\label{def:strong}
	The ``\emph{strong isometric path complexity}'' of a connected graph $G$, denoted by $\sipco{G}$, is the minimum integer $k$ such that for every vertex $r\in V(G)$, $\ipco{r,G}\leq k$. 
\end{definition}

\begin{theorem}[\cite{chakraborty2025strong}]\label{th:sipco-K_2,t}
	The strong isometric path complexity of graphs that do not contain $K_{2,t}$ as a $K$-fat minor is at most $(48K+5)t-1$.
\end{theorem}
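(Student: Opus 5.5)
The plan is to fix a root $r\in V(G)$ and an isometric path $P$ of $G$. Suppose $P$ cannot be covered by $(48K+5)t-1$ rooted paths; from this we extract a $K$-fat $K_{2,t}$ minor model. Along $P=(p_0,\dots,p_m)$ the function $x\mapsto d_G(r,x)$ is $1$-Lipschitz. A vertex $p_a$ lies on some $(r,p_b)$-isometric path exactly when $d_G(r,p_a)+d_G(p_a,p_b)=d_G(r,p_b)$, i.e.\ when $d_G(r,p_b)-|a-b|=d_G(r,p_a)$.

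First, I will cover $P$ greedily. Start from an endpoint, and at each step take the vertex $y_i\in V(P)$ such that some fixed $(r,y_i)$-isometric path $Q_i$ covers the longest possible initial stretch of the still-uncovered part of $P$. Using the characterisation above, each $Q_i$ covers a contiguous interval $I_i$ of $P$ on one side of $y_i$; a single geodesic can cover vertices on both sides of $y_i$ only in a controlled way, which at worst changes the count by a factor absorbed in the constant. This yields a sequence $y_1,\dots,y_N$ with $N>(48K+5)t-1$, where maximality means that no single rooted geodesic covers $I_i\cup I_{i+1}$.

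Second, I will select $t$ indices $i_1<\dots<i_t$ spaced roughly $48K+5$ apart. I will show that the corresponding geodesics $Q_{i_j}$, truncated to the part above some level $d_G(r,\cdot)\geq \rho$, are pairwise at distance at least $K$, and are at distance at least $K$ from $P$ except near their endpoints $y_{i_j}$. The key sub-claim is a \emph{divergence lemma}. Suppose two geodesics $Q_i,Q_{i'}$ with $i'-i$ large come within distance $<K$ of each other at a common height $\ell$. Splice them: follow $Q_{i'}$ down to height $\ell$, jump across with $<K$ steps, then continue along $Q_i$. The triangle inequality along $P$ then bounds the defect $d_G(r,p_b)-|a-b|-d_G(r,p_a)$ by $O(K)$ for all intermediate $p_a$. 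Since each of the $\sim 48K$ intermediate greedy steps strictly increases the accumulated defect, this contradicts the spacing. The same argument, with $P$ itself in place of one geodesic, keeps the $Q_{i_j}$ far from $P$ away from $y_{i_j}$.

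Third, I assemble the model. Let $B_1$ be the subpath of $P$ between small neighbourhoods of $y_{i_1}$ and $y_{i_t}$ (or $P$ itself). Let $B_2=B_G(r,\rho)$, with $\rho$ chosen just below the height at which the truncated geodesics separate; concretely, take $\rho=\min_j d_G(r,y_{i_j})-cK$ for a suitable constant $c$. The $t$ branch paths are the middle segments of $Q_{i_j}$, which meet both $B_1$ and $B_2$ and are pairwise $K$-far; the ball $B_2$ must also be far from $B_1$, which follows from the choice of $\rho$ and $P$ being isometric. This gives a $K$-fat $K_{2,t}$ minor, a contradiction. The main obstacle is the divergence lemma in the second step, in particular making the greedy defect accounting tight enough to produce exactly the constant $48K+5$. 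I would first prove it with an unspecified linear constant, and then tune the choice of $\rho$ and the spacing.
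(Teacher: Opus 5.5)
\textbf{There is a genuine gap in your second and third steps.} Note that the paper does not prove this statement (it imports it from \cite{chakraborty2025strong}), so your proposal has to stand on its own.

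\emph{The divergence lemma is false as stated.} Splicing only yields the following. Suppose $Q_i,Q_{i'}$ have vertices $q,q'$ at a common height $\ell$ with $d_G(q,q')<K$, and write $h_i=d_G(r,y_i)$. Then $d_P(y_i,y_{i'})<h_i+h_{i'}-2\ell+K$, so the defect $d_P(y_i,y_{i'})-|h_i-h_{i'}|$ is less than $2(\min\{h_i,h_{i'}\}-\ell)+K$. This contradicts a defect of order $48K$ only when $\ell\ge \min\{h_i,h_{i'}\}-O(K)$. So rooted geodesics are forced apart only within height $O(K)$ below the lower endpoint of each pair, not above one common level $\rho=\min_j h_{i_j}-cK$. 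Nothing makes the selected heights close to each other.

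Here is a concrete example. Work in the grid with $r=(0,0)$, and let $P$ be the monotone (hence isometric) staircase from $(0,H)$ with repeated steps right, right, down. Your greedy intervals are the runs $I_j$ from $(2j,H-j)$ to $(2j+2,H-j)$, and consecutive selected vertices differ in height by about $48K+5$. A legitimate choice of $Q_j$ runs along the $x$-axis to $(2j,0)$, up to $(2j,H-j)$, then along $I_j$. For $j<j'$, the paths $Q_j$ and $Q_{j'}$ coincide up to height $2j$ and stay within distance $K$ up to height about $2j+K/2$. This is above $\rho$ whenever $2j\ge H+j_1+2-cK$. Taking $H\approx(48K+5)t$ and $j_1$ small, this holds for roughly half of the consecutive selected pairs. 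Nothing in this configuration is contradictory, so the lemma fails. Your plan also has no rule for choosing the $Q_i$ that would avoid this.

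\emph{The third step fails independently.} Isometry of $P$ gives no lower bound on $d_G(r,\cdot)$ at vertices of $P$ between selected ones. Insert into such a staircase, between two selected vertices, a straight descent of length $D$ followed by a straight rightward run of length $D$, with $D$ large. Then $P$ stays monotone, hence isometric, but $B_1$ now has vertices inside $B_2=B_G(r,\rho)$.

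There is a second problem in this step. Since $Q_i$ covers $I_i$ and has exactly one vertex on each level, it contains $I_i$ as a subpath. So it runs along $P$ for $|I_i|$ steps, which is unbounded. Your claim that $Q_{i_j}$ is $K$-far from $P$ away from $y_{i_j}$ is therefore false for your greedy geodesics.

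\emph{What is missing is a level-adapted construction, not constant-tuning.} Each geodesic must be cut $O(K)$ below its own endpoint. Consequently the $r$-side branch set cannot be a single ball around $r$; it could, for instance, be assembled from the lower parts of the chosen geodesics, which are connected through $r$. In addition, the target vertices must be chosen so that their rooted geodesics leave $P$ immediately. Finally, the $r$-side branch set must stay $K$-far from the portion of $P$ used as the other branch set.
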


For a graph class $\mathcal{G}$, $\sipco{\mathcal{G}} = \max\{ \sipco{G}:G\in \mathcal{G} \}$.  The next lemma follows from the proof of a lemma of \cite{chakraborty2025k}, but we provide a proof for completeness.

	\begin{lemma}\label{lem:sipco-distort}
	Let $X$ and $Y$ be two graphs, $f\colon V(X)\rightarrow V(Y)$ be a function, and $r\in V(X)$ be a vertex. Let $t\geq 1, k\geq 1$ be two integers such that the following hold. \begin{enumerate}[label=(\roman*)]
		\item For any edge $ab\in E(X)$, $\distance{Y}{f(a)}{f(b)}\leq k+1$;
		\item \sloppy For any $r$-rooted pair $(a,b)$ in $X$,  $\distance{Y}{f(a)}{f(b)}\leq \distance{X}{a}{b} + t$.
	\end{enumerate} Then, for $\{u,v\}\subseteq V(X)$, $\distance{Y}{f(u)}{f(v)}\leq \distance{X}{u}{v} + t\cdot \sipco{X} + k(\sipco{X}-1)$.
\end{lemma}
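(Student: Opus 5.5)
Fix $u,v\in V(X)$ and a $(u,v)$-isometric path $P$ in $X$, and let $\kappa=\sipco{X}$. Since $\ipco{r,X}\le\kappa$, the vertices of $P$ are covered by at most $\kappa$ many $r$-rooted isometric paths $Q_1,\dots,Q_m$ with $m\le\kappa$.

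\textbf{Reduction to consecutive blocks.} I would first arrange for each $Q_i\cap P$ to be a subpath of $P$, so that $P$ is cut into at most $m$ consecutive subpaths $P_1,\dots,P_m$ with $V(P_i)\subseteq V(Q_i)$. The key observation is that if $a,b\in V(Q_i)$, then $(a,b)$ or $(b,a)$ is an $r$-rooted pair. Indeed, $Q_i$ is an isometric path starting at $r$. If $a$ lies between $r$ and $b$ on $Q_i$, then the prefix of $Q_i$ up to $b$ is an $(r,b)$-isometric path through $a$. Moreover $d_X(a,b)$ equals the distance along $Q_i$. To handle covering sets that are not contiguous, I assign each vertex of $P$ to one covering path. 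I then note that for the first and last vertex $a,b$ of $P$ assigned to $Q_i$, the subpath $P[a,b]$ and $Q_i[a,b]$ are both isometric with the same endpoints. Hence I can simply replace the assignment by the interval: every vertex of $P$ between $a$ and $b$ is treated as belonging to the block of $Q_i$, using only the endpoints $a,b$. Greedily, walking along $P$ from $u$, I take the path $Q_i$ containing the current vertex whose last occurrence on $P$ is farthest, and jump to that last occurrence. This yields a partition of $P$ into at most $\kappa$ blocks. Each block is a subpath $P[a_j,b_j]$ with $a_j,b_j\in V(Q_{i_j})$, with distinct $i_j$, and consecutive blocks joined by an edge $b_ja_{j+1}\in E(X)$.

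\textbf{Summing.} For each block, hypothesis (ii) applied to the rooted pair formed by $a_j,b_j$ gives
\[
\distance{Y}{f(a_j)}{f(b_j)}\le \distance{X}{a_j}{b_j}+t .
\]
For each connecting edge, hypothesis (i) gives
\[
\distance{Y}{f(b_j)}{f(a_{j+1})}\le k+1 = \distance{X}{b_j}{a_{j+1}}+k .
\]
By the triangle inequality in $Y$ along $f(a_1)=f(u),f(b_1),f(a_2),\dots,f(b_{m'})=f(v)$ with $m'\le\kappa$ blocks, I get
\[
\distance{Y}{f(u)}{f(v)}\le \sum_j \distance{X}{a_j}{b_j}+\sum_j \distance{X}{b_j}{a_{j+1}}+m't+(m'-1)k .
\]
Since $P$ is isometric, the $X$-distances sum to $|P|=\distance{X}{u}{v}$. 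This gives the claimed bound $\distance{X}{u}{v}+t\kappa+k(\kappa-1)$.

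\textbf{Main obstacle.} The delicate step is the greedy block decomposition: ensuring at most $\kappa$ blocks, each using a distinct covering path. Suppose the greedy procedure used some $Q_i$ twice. Its second use would start at a vertex of $Q_i$ lying after the chosen jump point. But at the earlier step we jumped to the last occurrence on $P$ of some covering path containing the current vertex. If $Q_i$ also contained that current vertex, its last occurrence would be at most the jump point, so $Q_i$ cannot reappear, a contradiction. If $Q_i$ was used earlier but does not contain the current vertex, it was used in an earlier block, and the same argument applies to that block. So, by induction, the blocks use distinct paths, and there are at most $\kappa$ of them.
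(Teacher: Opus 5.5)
Your proof is correct, and its skeleton is the paper's. You split the $(u,v)$-isometric path $P$ into at most $\sipco{X}$ consecutive blocks whose two ends lie on a common $r$-rooted isometric path. You then apply (ii) to each block and (i) to each connecting edge, and telescope using that $P$ is isometric.

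Where you differ is in how the blocks are produced. The paper fixes a minimum-cardinality cover with maximal intersections and asserts that each $V(P)\cap V(Q)$ induces a subpath of $P$. This needs an exchange argument that the paper leaves implicit: reroute $Q$ along $P$ between its first and last vertices on $P$. Its choice of $a_i$ as the end of $P_i$ adjacent to $b_{i-1}$ also tacitly assumes the pieces do not overlap.

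Your last-occurrence walk needs neither contiguity nor disjointness. It uses only the fact that two vertices of one $r$-rooted isometric path form an $r$-rooted pair in some order. That makes your route more self-contained. The paper's normalisation gives the slightly stronger structural fact that $P$ is covered by contiguous pieces of the rooted paths, which your argument never needs.

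You can shorten your ``main obstacle'' paragraph to one line. Whenever $Q_i$ is selected, you jump to its last vertex on $P$, so every later block starts strictly after $V(Q_i)\cap V(P)$ and $Q_i$ can never be selected again. This holds for any choice of covering path through the current vertex, so the ``farthest'' rule is not needed.
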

\begin{proof}
	Let $P$ be any $(u,v)$-isometric path in $X$ and $\mathcal{Q}$ be a minimum cardinality set of $r$-rooted isometric paths in $X$ that covers $P$ and for any $Q\in \mathcal{Q}$, $|V(P)\cap V(Q)|$ is maximized. Let $z\coloneq |\mathcal{Q}|$ and by definition, $z\leq \sipco{X}$. Observe that for any $Q\in \mathcal{Q}$, the path induced by  $V(P)\cap V(Q)$ is an isometric path in $X$.  Moreover, there exists exactly one path $Q_1\in \mathcal{Q}$ that contains $u$. Otherwise, if there exists a path $Q'\in \mathcal{Q}\setminus \{Q_1\}$ containing $u$, then either $V(P)\cap V(Q')$ contains $V(P)\cap V(Q_1)$, or vice versa.  But this contradicts the minimality of $\mathcal{Q}$.
	
	Let $Q_1\in \mathcal{Q}$ be the path containing $u$, and define $P_1\coloneq P[V(P)\cap V(Q_1)]$. 
	Let $a_1\coloneq u$, and $b_1$ be the end-vertex of $P_1$ distinct from $a_1$, unless $P_1$ is reduced to a single vertex in which case $b_1=a_1$.  
	
	For $i\in\{2,\ldots, z\}$, let $Q_i\in \mathcal{Q}$ be a path that contains the end-vertex of the path induced by $V(P)\setminus \displaystyle\bigcup\limits_{j<i} V(Q_j)$ which is distinct from $v$. For $i\in\{2,\ldots, z\}$, let $P_i= P[V(P)\cap V(Q_i)]$, and $a_i$ be the end-vertex of $P_i$ which is adjacent to $b_{i-1}$ in $P$, and let $b_i$ be the end-vertex of $P_i$ distinct from $a_i$. Note that $b_z=v$.
	Hence,
	
	\begin{equation*} \label{eq1}
		\begin{split}
			\distance{Y}{f(u)}{f(v)} & = \distance{Y}{f(a_1)}{f(b_z)} \\
			& \leq \displaystyle\sum\limits_{i=1}^{z} \distance{Y}{f(a_i)}{f(b_i)} + \displaystyle\sum\limits_{i=1}^{z-1} \distance{Y}{f(b_i)}{f(a_{i+1})}\\
			& \leq \displaystyle\sum\limits_{i=1}^{z} (\distance{X}{a_i}{b_i}+t) + \displaystyle\sum\limits_{i=1}^{z-1} (k+1) \hspace{10pt} \\
			& \leq   \displaystyle\sum\limits_{i=1}^{z} \distance{X}{a_i}{b_i} + \displaystyle\sum\limits_{i=1}^{z-1} \distance{X}{b_i}{a_{i+1}}+ t\cdot z + k(z-1) \\
			& \leq  \distance{X}{a_1}{b_z} + t\cdot z + k(z-1) \\
			& =   \distance{X}{u}{v} + t\cdot z + k(z-1)
		\end{split}
	\end{equation*}
	This completes the proof.
\end{proof}

\begin{lemma}\label{lem:attach-ipco}
Let $H$ be a graph such that $\sipco{\langle H\rangle}\leq k$. Then, for any path $P$ of $H$ and a component $C$ of $H-V(P)$, $|N_H(C)\cap V(P)|\leq k$.
\end{lemma}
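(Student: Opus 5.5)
Argue by contradiction. Suppose $P$ is a path of $H$ and $C$ is a component of $H-V(P)$ with $|N_H(C)\cap V(P)|\geq k+1$. Pick $k+1$ distinct vertices $p_1,\dots,p_{k+1}$ of $N_H(C)\cap V(P)$, listed in the order in which they appear along $P$. For each $i$, choose a neighbour $c_i\in V(C)$ of $p_i$.

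The plan is to build a subdivision $H'\in\langle H\rangle$ with three features:
\begin{itemize}
\item the subpath of $P$ from $p_1$ to $p_{k+1}$ becomes an isometric path $P'$ of $H'$;
\item one vertex $x$ on the $C$ side serves as the root;
\item for each $i$, the only $x$-rooted isometric path meeting $p_i$ ends at $p_i$ and contains no other $p_j$.
\end{itemize}
Then covering $P'$, or even just the vertices $p_1,\dots,p_{k+1}$, by $x$-rooted isometric paths needs at least $k+1$ paths. This contradicts $\ipco{x,H'}\leq \sipco{H'}\leq k$.

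Concretely, fix $x\in V(C)$. Subdivide every edge of $H$ lying inside $C$, and every edge between $C$ and $V(H)\setminus V(P)$, into a path of length $1$; that is, leave them unchanged. Each edge $p_ic_i$ is left short, of length $1$. Every other edge incident to $V(P)$ is subdivided to a huge length $M$, with $M\gg |V(H)|$. This includes the edges of $P$ itself, the edges from $C$ to other vertices of $P$, and the edges from $P$ to other components. After this, every path in $H'$ from $x$ to $p_i$ either goes through $C$ and then the edge $c_ip_i$ (or another short $C$–$P$ edge), with length at most $|V(C)|+1$, or uses at least one long edge and so has length at least $M$.

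The edges of $P$ need separate treatment. Subdivide the edge of $P$ between consecutive vertices (in order along $P$) to length $M$. Then the subpath $P'$ of $P$ from $p_1$ to $p_{k+1}$ has length about $(\text{number of edges})\cdot M$. Any detour between two of its vertices either uses $C$, costing at most $|V(C)|+2$ short edges, or uses other long edges. To stop the detour through $C$ from shortcutting $P'$, make the edges of $P$ short (length $1$). Make long (length $M$) all edges between $C$ and $V(P)$ other than the chosen $c_ip_i$, and all edges from $C$ to other components' attachments. Then every path between $p_i$ and $p_j$ that leaves $P$ has length at least $2$ plus the distance through $C$.

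This is still not quite enough. The real obstacle is balancing these lengths: $P'$ must be isometric, while each $x$–$p_i$ geodesic must enter $P$ exactly at $p_i$. I would first try to make the $C$-attachments long. Subdivide each edge $c_ip_i$ into length $L$, with $L$ larger than the length of $P'$ plus $|V(C)|$. Subdivide all other edges leaving $V(P)$ into length $\gg L$, and keep $P$ and $C$ unsubdivided.
\begin{itemize}
\item \emph{$P'$ is isometric.} Any $p_i$–$p_j$ path leaving $P$ costs at least $2L$, which exceeds the length of $P'$.
\item \emph{Distance from $x$.} We have $d(x,p_i)\geq L$. The direct route through $c_i$ has length at most $L+|V(C)|$. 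Any route entering $P$ at some $p_j\neq p_i$ and then travelling along $P$ to $p_i$ costs at least $L+1$. So this alone is not decisive.
\end{itemize}
To fix this, subdivide also each edge inside $C$ and each edge of $P$ into length $D$. Choose $L\gg D\cdot|V(H)|$, and then interleave the lengths so that the internal distances in $C$ become negligible. Make the edges of $P$ between consecutive $p_i$ long compared with $\diam(C)$. A geodesic from $x$ to $p_i$ then costs $L+O(\diam C)$ directly, versus $L+(\text{gap along }P)$ via another $p_j$, so it enters at $p_i$.

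With these choices, every $x$-rooted isometric path contains at most one vertex among the $p_i$'s: once it reaches some $p_i$, continuing along $P$ to $p_j$ is strictly longer than entering $p_j$ directly. Therefore at least $k+1$ rooted paths are needed, the desired contradiction. Checking these inequalities (gap $\gg \diam C$, and $2L >$ length of $P'$) is routine once the three scales are fixed in that order.
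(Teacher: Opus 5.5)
Your final construction is correct, but its key step works differently from the paper's.

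\textbf{The paper's mechanism.} The paper fixes a spanning tree $T$ of $C$ and a root $r\in V(C)$. It gives the edges of $T$ and of $P$ length $1$, gives each attachment edge $ur_u$ length $t_3-d_T(r,r_u)$, and gives every remaining edge (including chords of $P$ and non-tree edges of $C$) a huge length $t_4$. This puts every vertex of $N_H(C)\cap V(P)$ at distance exactly $t_3$ from $r$. Distance to the root strictly increases along an $r$-rooted isometric path, so no such path contains two of these vertices.

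\textbf{Your mechanism.} You replace exact equidistance by a separation of scales. You keep $C$ unsubdivided, give the edges of $P$ length $D>\diam(C)$, give the chosen edges $c_ip_i$ length $L$ with $2L$ larger than the length of $P'$, and make all other edges at $V(P)$ longer still. Then $L\le d(x,p_i)\le L+\diam(C)$. By the isometry of $P'$, $d(p_i,p_j)=d_{P'}(p_i,p_j)\ge D$. Hence $d(x,p_i)+d(p_i,p_j)>d(x,p_j)$ for all $i\ne j$.

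\textbf{Trade-off.} The paper's compensating weights give an exact one-line argument, and all of $P$ serves as the isometric path without stretching. Your version needs no spanning tree, at the cost of a three-level length hierarchy and an inequality check.

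\textbf{Write-up fixes.}
\begin{itemize}
\item Drop the two abandoned attempts and state the final lengths once.
\item Remove the sentence that subdivides the edges of $C$ to length $D$; it contradicts what you need.
\item Say explicitly that chords of $P$ get a large length. $P$ need not be induced, and otherwise $P'$ may fail to be isometric.
\item In the last step, derive $d(p_i,p_j)\ge D$ from the isometry of $P'$, not from ``continuing along $P$''.
\item Your third bullet overclaims. You only need that no $x$-rooted isometric path contains two of the $p_i$, not that such a path is unique or ends at $p_i$.
\end{itemize}
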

\begin{proof}
Assume for contradiction that there is a path $P$ in $H$ and a component $C$ of $H-V(P)$ such that $|N_H(C)\cap V(P)|\geq k+1$.
For each $u\in N_H(C)\cap V(P)$, let $r_u$ be a vertex of $C$ adjacent to $u$. 
Let $T$ be a spanning tree of $C$ and $r$ be any vertex of $C$. Set $t_1=|E(P)|$, $t_2=\max_{u\in V(C)} \distance{T}{r}{u}$, $t_3=t_1+t_2+2$ and $t_4=2t_3+t_1+2$. 
Define a function $\phi\colon E(H)\rightarrow \mathbb{Z}^+$ as follows.
\begin{enumerate}[label=(\roman*),topsep=0pt]
	\item For each $e\in E(T)\cup E(P)$, define $\phi(e)=1$.
	\item  For each $u\in N_H(C)\cap V(P)$, define $\phi(ur_u)=t_3-\distance{T}{r}{r_u}$. 
	\item For any edge $e$ that does not satisfy the above two conditions, define $\phi(e)=t_4$.  
\end{enumerate}

Construct a graph $H^*$ by replacing each edge $e$ of $H$ with a path of length $\phi(e)$. Clearly, $H^*\in \langle H \rangle$. 
We prove the following claim. 

\begin{claim}\label{clm:W}
		The path $P$ is isometric in $H^*$. 
\end{claim} 
\begin{subproof}
	By definition, $P$ is a path in $H^*$.  Let $x,y$ be the endpoints of $P$ and $Q$ be any $(x,y)$-path distinct from $P$. By definition of $H^*$, $Q$ must contain a subpath of length at least $t_1+2$ and hence $|E(Q)|>|E(P)|$.   
\end{subproof}

\begin{claim}\label{clm:ipcok+1}
	Any $r$-rooted isometric path in $H^*$ contains at most one vertex of $N_H(C)\cap V(P)$.
\end{claim}
\begin{subproof}
	By construction of $H^*$, for any $u\in N_H(C)\cap V(P)$, $\distance{H^*}{r}{u}= t_3$. (Indeed,  for any $u\in N_H(C)\cap V(P)$, there is an $(r,u)$-path in $H^*$ through $r_u$ that has length $t_3$ and any other $(r,u)$-path will have length at least $t_4>t_3$.) Hence, any path of $H^*$ that has $r$ as an endpoint and contains two vertices of $N_H(C)\cap V(P)$ will have length at least $t_3+1$, and thus not isometric in $H^*$.
\end{subproof}

By Claims~\ref{clm:W} and \ref{clm:ipcok+1}, $\ipco{r,H^*}\geq k+1$. This implies $\sipco{H^*}\geq k+1$, a contradiction.
\end{proof}

\section{Proof of Theorem~\ref{thm:additive}}

For the remainder of this section, let $G$ be a graph. For an integer $R\geq 1$, let $\mathcal{H}= (H, (V_h)_{h \in V(H)})$ be a nicely rooted $R$-bounded graph-partition of $G$ over a graph $H$. Let $s$ be the root node of $H$. Furthermore, for an integer $k\geq 1$, let the strong isometric path complexities of both $G$ and $\langle H \rangle$ be at most $k$. 
Next, we construct a graph $F$.

\subsection{Construction of $F$.}\label{sec:construct}
By Definition~\ref{def:nice}, there is a vertex $r$ 
such that $V_s=B_G(r,R_s)$. Define $r_s=r$ and call $r$ to be the \emph{root} of $F$.  
For each node $h\in V(H)$ with $h\neq s$, 
let $r_h$ be any vertex of $\delta^{\downarrow}_h$.  For each $h\in V(H)$, $r_h$ is the \emph{representative} of $h$ and the partition class $V_h$.
Let $p(s)=s$ and for each  node $h\in V(H)$ with $h\neq s$, let $p(h)$ denote a node $g\in V(H)$ such that $gh\in E(H)$ and $\heightn g=\heightn h-1$. 
For each edge $e=gh\in E(H)$ with $\heightn g=\heightn h-1$, introduce a path $P_e$ of length $d_G(r,r_h) - d_G(r,r_g)$.  
For each $h\in V(H)$ and vertex $v\in V_h\setminus \{r_h\}$, introduce a path $Q_v$ of length $d_G(r,v) - \distance{G}{r}{r_{p(h)}}$. 
Define 

$$\mathcal{S} = \{P_e\colon e\in E(H)\} \cup \{Q_v : h \in V(H), v \in V_h \setminus {r_h}\}$$  Due to
Lemma~\ref{lem:dst-incerase}, all paths in $\mathcal{S}$ have positive lengths, and therefore are well-defined. 
Now define 

$$X=V(G)\displaystyle\bigcup\limits_{P\in \mathcal{S}} V(P)$$

We complete the construction of $F$ by identifying some vertices of $X$ as follows. 
For each $e=gh\in E(H)$ identify one endpoint of $P_e$ with $r_g$ and the other with $r_h$. 
For each $h\in V(H)$ and vertex $v\in V_h\setminus \{r_h\}$, identify one endpoint of $Q_v$ with $v$ and the other with $r_{p(h)}$. 
This completes the construction of $F$.  
The meaning of all notations introduced above will remain consistent for the entirety of this section.
Next we establish some properties of $F$.

\subsection{Structure of $F$}

First we prove the following lemma.

\begin{lemma}\label{lem:sipco-F}
	Treewidth of $F$ is at most that of $H$ and $\sipco{F}\leq \sipco{\langle H\rangle}+2$.  
\end{lemma}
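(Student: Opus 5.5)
Since every layer $L^j$ is independent (Definition~\ref{def:nice}\ref{it:independent}), every edge $gh$ of $H$ joins consecutive layers, so every edge of $H$ gets a path $P_e$. Hence the subgraph $H'$ of $F$ formed by the representatives $r_h$ and the paths $P_e$ is a subdivision of $H$, i.e.\ $H'\in\langle H\rangle$. The remaining part of $F$ consists of the paths $Q_v$. Each $Q_v$ has one endpoint $r_{p(h)}\in V(H')$ and the other endpoint $v$, which is not a representative and is the endpoint of no other path of $\mathcal S$. So each $Q_v$ is a pendant path of $F$ attached to $H'$ at the single vertex $r_{p(h)}$. I would first record this structure: $F$ is $H'$ together with pendant paths hanging from vertices of $H'$. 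Both claims then reduce to standard facts about subdivisions and pendant paths.

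\emph{Treewidth.} Subdividing edges does not increase treewidth when $tw(H)\ge 1$: replace a bag containing $g,h$ by a chain of bags that adds the subdivision vertices two at a time. Attaching a pendant path at a vertex $a$ does not increase treewidth either: hang a path of bags of size $2$ from any bag containing $a$. So $tw(F)\le \max(tw(H),1)=tw(H)$. The degenerate case where $H$ is a single vertex (then $F$ is a tree) has to be treated separately or set aside as a convention.

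\emph{Strong isometric path complexity.} First I will note that $H'$ is an isometric subgraph of $F$, because a pendant path cannot lie on a shortest path between two vertices of $H'$. Let $P$ be an isometric path of $F$ and $x\in V(F)$ a root. The vertex set of $P$ splits into at most three pieces: a core $P\cap H'$, which is a subpath and isometric in $H'$, and at most two end segments, each lying on a single pendant path. A path entering a pendant path cannot leave it again, so $P$ meets at most two pendant paths, and only at its ends.

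Let $a=x$ if $x\in V(H')$; otherwise let $a$ be the attachment vertex of the pendant path containing $x$. Every $a$-rooted isometric path of $H'$, prefixed with the subpath from $x$ to $a$, is an $x$-rooted isometric path of $F$, since every path from $x$ into $H'$ passes through $a$. Since $\sipco{H'}\le \sipco{\langle H\rangle}$, the core can be covered by $\sipco{\langle H\rangle}$ such paths. For each end segment, lying on a pendant path $Q$ with attachment $b$ and free end $q$, I take an isometric $(x,q)$-path. Because $Q$ is pendant, this path goes through $b$ and contains all of $Q$, hence covers the segment. This also works when $x$ itself lies on $Q$: then the $(x,q)$-path covers the part of $Q$ beyond $x$, and the part of the segment between $x$ and $b$ lies on the $(x,a)$ prefix already used. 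This gives at most $\sipco{\langle H\rangle}+2$ rooted paths in total.

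The main obstacle is the bookkeeping in this last case, when the root lies on the same pendant path as an end segment of $P$. I expect the case analysis above to handle it, so the step is routine.
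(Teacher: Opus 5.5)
Your proposal is correct and follows essentially the same route as the paper: remove the pendant paths $Q_v$ to obtain the subdivision $F^*=H'$ of $H$. Then cover the core of an isometric path with $\sipco{\langle H\rangle}$ rooted paths (routed through the attachment vertex when the root lies on a pendant path), and cover each of the at most two pendant end segments with one more path. You are more careful than the paper on three points it leaves implicit: independence of the layers ensures every edge of $H$ gets a path $P_e$, $H'$ is isometric in $F$, and the treewidth bound can fail when $H$ is a single vertex ($tw(H)=0$, while $F$ may be a nontrivial tree).
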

\begin{proof}
	
	For every node $g$ and vertex $v\in V_g\setminus \{r_g\}$, delete from $F$, all vertices of $Q_v\setminus \{r_{p(g)}\}$. The resulting graph, say $F^*$, is a subdivision of $H$. Since subdivision of edges preserves treewidth, we have that $tw(F)\leq tw(H)$.
	 For the second part, let $k=\sipco{\langle H\rangle }$ and $P$ be any isometric path of $F$. Let $E_1$ be the set of edges of  $P$ that lie in some pendant path of $F$. Clearly $E_1$ induces at most two components $C_1,C_2$ and $E(P)\setminus E_1$ induces an isometric path $Q$ of $F^*$.	Let $v$ be any vertex of $F$. 
	
	Suppose $v\in V(F^*)$. Since $F^*\in \langle H \rangle$, $\sipco{F^*}\leq k$ i.e. vertices of $Q$ can be covered with at most $k$ many $v$-rooted isometric paths in $F^*$, and thus in $F$. Since $C_1,C_2$ are subpaths of pendant paths, each of them can be covered with one $v$-rooted isometric path in $F$. Therefore, $P$ can be covered with $k+2$ many $v$-rooted isometric paths. 
	
	Suppose $v\not\in V(F^*)$. If $v$ is incident to some edge in $C_1$ or $C_2$, then $P$ can be covered with two $v$-rooted paths. Otherwise,  there exists a vertex $v'\in F^*$ such that any isometric path in $F$ containing $v$ as an endpoint also contains $v'$. Since $F^*\in \langle H \rangle$, $\sipco{F^*}\leq k$,  similar arguments as above imply that $P$ can be covered with $k+2$ many $v$-rooted paths.
\end{proof}


\subsection{Upper bounds on distances of rooted pairs of $G$}
In the next lemma, we establish upper bounds on the lengths of the paths in $\mathcal{S}$ and the distances (in $F$) between the representatives of nodes of $H$.

\begin{lemma}\label{dist:rep}
	The graph $F$ satisfies the following. 
	\begin{enumerate}[label=(\alph*),topsep=0pt]
		\item\label{it:dr1} For an edge $e=gh\in E(H)   $ with $\heightn g=\heightn h-1$, $\distance{G}{r_g}{r_h}\leq 2R+1$, the length of $P_e$ is at most $2R+1$, and
		 $\distance{F}{r_g}{r_h} \leq 2R+1$.
		
		\item\label{it:dr2} For a node $h\in V(H)$, and a vertex $v\in V_h\setminus \{r_h\}$, the length of $Q_v$ is at most $3R+1$ and, $\distance{F}{v}{r_{p(h)}}\leq3R+1$.
		
		\item\label{it:dr4} For any $v\in V(G), h\in V(H)$ with $v\in V_{h}$, $\distance{F}{v}{r_h} \leq 5R+2$.
		
		\item\label{it:dr5} For an edge $uv\in E(G)$, $\distance{F}{u}{v} \leq 12R+5$.
	\end{enumerate}
\end{lemma}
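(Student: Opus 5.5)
The plan is to obtain every bound from the triangle inequality in $G$, the $R$-boundedness of the parts, and the fact that each path of $\mathcal{S}$ is a path in $F$. Since each $P\in\mathcal{S}$ joins its two endpoints in $F$, the $F$-distance between those endpoints is at most the length of $P$. The parts are proved in order, each using the previous ones.

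For \ref{it:dr1}, I will use honesty. There is an edge $xy\in E(G)$ with $x\in V_g$ and $y\in V_h$. We have $r_g\in V_g$: when $g=s$, $r_s=r\in V_s$, and otherwise $r_g\in\downboundary{g}\subseteq V_g$. Similarly $r_h\in V_h$. Then $R$-boundedness gives
\[
\distance{G}{r_g}{r_h}\le \distance{G}{r_g}{x}+1+\distance{G}{y}{r_h}\le 2R+1 .
\]
The length of $P_e$ is $\distance{G}{r}{r_h}-\distance{G}{r}{r_g}$, which by the triangle inequality is at most $\distance{G}{r_g}{r_h}\le 2R+1$. Finally, $\distance{F}{r_g}{r_h}\le |E(P_e)|$ because $P_e$ joins $r_g$ and $r_h$ in $F$.

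For \ref{it:dr2}, the length of $Q_v$ is $\distance{G}{r}{v}-\distance{G}{r}{r_{p(h)}}\le \distance{G}{v}{r_{p(h)}}$. If $h=s$, then $p(s)=s$ and $r_{p(s)}=r$, so this length is $\distance{G}{r}{v}\le R_s\le R$. If $h\neq s$, then
\[
\distance{G}{v}{r_{p(h)}}\le \distance{G}{v}{r_h}+\distance{G}{r_h}{r_{p(h)}}\le R+(2R+1),
\]
using $R$-boundedness of $V_h$ and part \ref{it:dr1} applied to the edge $p(h)h$. Again $\distance{F}{v}{r_{p(h)}}\le |E(Q_v)|$.

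For \ref{it:dr4}, the case $v=r_h$ is trivial. If $h=s$ and $v\neq r$, then $\distance{F}{v}{r}\le R$ directly from \ref{it:dr2}. Otherwise I route through $r_{p(h)}$:
\[
\distance{F}{v}{r_h}\le \distance{F}{v}{r_{p(h)}}+\distance{F}{r_{p(h)}}{r_h}\le (3R+1)+(2R+1),
\]
by \ref{it:dr2} and \ref{it:dr1} respectively.

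For \ref{it:dr5}, let $u\in V_g$ and $v\in V_h$.
\begin{itemize}
\item If $g=h$, then going through $r_h$ and applying \ref{it:dr4} twice gives at most $10R+4$.
\item If $g\neq h$, then $gh\in E(H)$ by the definition of an $H$-partition. By Definition~\ref{def:nice}\ref{it:independent} the layers are independent, so $\heightn g$ and $\heightn h$ differ by exactly one; say $\heightn g=\heightn h-1$. Then the path $P_{gh}$ exists, and
\[
\distance{F}{u}{v}\le \distance{F}{u}{r_g}+\distance{F}{r_g}{r_h}+\distance{F}{r_h}{v}\le (5R+2)+(2R+1)+(5R+2)=12R+5 .
\]
\end{itemize}

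The only delicate points are bookkeeping rather than genuine obstacles. First, one must handle the root node $s$ separately, since $r_s=r$ is not a down-boundary vertex and $p(s)=s$. Second, in \ref{it:dr1} and \ref{it:dr5} one must invoke independence of the layers to guarantee that an $H$-edge always joins consecutive layers, so that the corresponding path $P_e$ was actually introduced in the construction of $F$.
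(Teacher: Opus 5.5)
Your proof is correct and follows essentially the same route as the paper's: an honest edge plus $R$-boundedness and the triangle inequality for (a), routing through $r_h$ in $G$ for (b), through $r_{p(h)}$ in $F$ for (c), and through the representatives for (d). Treating the root $s$ separately and appealing to independence of layers so that $P_{gh}$ exists just spells out details the paper leaves implicit.
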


\begin{proof}
To prove \ref{it:dr1}, assume without loss of generality $\heightn g=\heightn h-1$. By definition, $r_h$ is a vertex of $\downboundary{h}$ and there exists $x\in V_g, x'\in \downboundary{h}$ such that $xx'\in E(G)$. Since $\distance{G}{x}{r_g}\leq R$ and $\distance{G}{x'}{r_h}\leq R$, we have $\distance{G}{r}{r_h} - \distance{G}{r}{r_g} \leq \distance{G}{r_h}{r_g}\leq 2R+1$.  Now the statement follows from the definition of $P_e$ and the definition of $F$.

\medskip \noindent To prove \ref{it:dr2}, observe that  $\distance{G}{v}{r_{p(h)}} \leq \distance{G}{v}{r_{h}} +  \distance{G}{r_{h}}{r_{p(h)}}$. By definition, $\distance{G}{v}{r_{h}} \leq R $. Now \ref{it:dr1} implies $ \distance{G}{r}{v} - \distance{G}{r}{r_{p(h)}} \leq \distance{G}{v}{r_{p(h)}} \leq 3R+1$. Now the statement \ref{it:dr2} follows from the definition of $Q_v$, and the definition of $F$.

	\medskip \noindent	To prove \ref{it:dr4}, observe that $\distance{F}{v}{r_h} \leq \distance{F}{v}{r_{p(h)}} + \distance{F}{r_h}{r_{p(h)}}$. Now \ref{it:dr1} and \ref{it:dr2} imply $\distance{F}{v}{r_h} \leq 5R+2$. 
	
	\medskip \noindent	To prove \ref{it:dr5}, consider an edge $xy\in E(G)$. First assume there exists a vertex $h\in V(H)$ such that $x,y\in V_h$.
	Observe that $\distance{F}{x}{y} \leq \distance{F}{x}{r_h} + \distance{F}{r_h}{y}$. By \ref{it:dr4}, $\distance{F}{x}{y}\leq 10R+4$. 
	Now consider the case when there is an edge $e=gh\in E(H)$ with $\heightn g=\heightn h-1$ and $x\in V_h, y\in V_g$. Observe that $\distance{F}{x}{y}\leq \distance{F}{x}{r_h} + \distance{F}{r_h}{r_g} + \distance{F}{r_g}{y}$. Now \ref{it:dr1} and \ref{it:dr4} imply, $\distance{F}{x}{y}\leq 12R+5$.
\end{proof}

\begin{lemma}\label{lem:no-increase}
	For any $s$-rooted pair $(g,h)$ in $H$, $\distance{F}{r_g}{r_h} \leq \distance{G}{r_g}{r_h}$. 
\end{lemma}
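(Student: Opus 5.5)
The plan is to exhibit an explicit $(r_g,r_h)$-path in $F$ whose length is exactly $\distance{G}{r}{r_h}-\distance{G}{r}{r_g}$, and then compare this with $\distance{G}{r_g}{r_h}$ via the triangle inequality.

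Since $(g,h)$ is an $s$-rooted pair in $H$, the node $g$ lies on some $(s,h)$-isometric path $s=a_0,a_1,\ldots,a_m=h$ of $H$, with $g=a_i$ for some $i$. Along an isometric path from $s$, consecutive nodes satisfy $\heightn{a_{j+1}}=\heightn{a_j}+1$. Hence each edge $e_j=a_ja_{j+1}$ is an edge of $H$ going from layer $\heightn{a_j}$ to layer $\heightn{a_j}+1$. By construction, $F$ therefore contains the path $P_{e_j}$ of length $\distance{G}{r}{r_{a_{j+1}}}-\distance{G}{r}{r_{a_j}}$ joining $r_{a_j}$ and $r_{a_{j+1}}$. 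This holds regardless of the choice of $p(\cdot)$, since every edge of $H$ between consecutive layers receives its path $P_e$.

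Concatenating $P_{e_i},P_{e_{i+1}},\ldots,P_{e_{m-1}}$ gives an $(r_g,r_h)$-walk in $F$, and its length telescopes:
\[
\distance{F}{r_g}{r_h}\le \sum_{j=i}^{m-1}\bigl(\distance{G}{r}{r_{a_{j+1}}}-\distance{G}{r}{r_{a_j}}\bigr)=\distance{G}{r}{r_h}-\distance{G}{r}{r_g}.
\]
By the triangle inequality in $G$, the right-hand side is at most $\distance{G}{r_g}{r_h}$, which gives the claim.

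The only step needing care is that every $P_{e_j}$ is well defined and has positive length, which requires $\distance{G}{r}{r_{a_{j+1}}}>\distance{G}{r}{r_{a_j}}$. This follows from Lemma~\ref{lem:dst-incerase}, as already noted in the construction of $F$. For the degenerate case $g=h$ the statement is trivial. If $g=s$, then $r_g=r$ and the same telescoping argument applies.
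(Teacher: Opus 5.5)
Your proposal is correct and follows essentially the same argument as the paper. Both proofs take an $(s,h)$-isometric path of $H$ through $g$ and bound $d_F(r_g,r_h)$ by the total length of the paths $P_e$ along its $(g,h)$-subpath. That sum telescopes to $d_G(r,r_h)-d_G(r,r_g)$, and the triangle inequality bounds this by $d_G(r_g,r_h)$.
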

\begin{proof}
	If $g=h$, the lemma is trivially true. So, assume $g\neq h$ and let $(g,h)$  be an $s$-rooted pair in $H$ and $P$ be any $(s,h)$-isometric path in $H$ that contains $g$. Let $e_1,e_2,\ldots,e_k$ be the edges of the $(g,h)$-subpath of $P$ where $e_1$ is incident to $g$, for $i\in [k-1]$, $e_i,e_{i+1}$ share an endpoint and $e_k$ is incident to $h$. For $i\in [k]$, let $e_i=g_ih_i$ such that $\distance{H}{s}{g_i}=\distance{H}{s}{h_i}-1$. Observe that, $g=g_1, h=h_k$, and $i\in [k-1]$, $h_i=g_{i+1}$. Thus,
	\begin{align*}
		\distance{F}{r_g}{r_h} & \leq \displaystyle\sum\limits_{i=1}^k \distance{F}{r_{g_i}}{r_{h_i}}\\
		& \leq \displaystyle\sum\limits_{i=1}^k \distance{G}{r}{r_{h_i}} - \distance{G}{r}{r_{g_i}} & [\text{Definition of }F]\\
		& = \distance{G}{r}{r_{h_k}} - \distance{G}{r}{r_{g_1}}\\
		& \leq \distance{G}{r_{h}}{r_g}
	\end{align*}
	This completes the proof.
\end{proof}

Next, we show that the distances between two vertices forming an $r$-rooted pair in $G$ do not increase by too much in $F$.

\begin{lemma}\label{lem:G->F}
	There is a function $\beta\colon V(G)\rightarrow V(F)$, such that all of the following hold. 
	\begin{enumerate}[label=(\alph*),topsep=0pt]
		\item\label{it:distG-1} For each $x\in V(F)$, there is a vertex $y\in V(G)$ such that $\distance{F}{x}{\beta(y)} \leq 3R+1$. 
		
		\item\label{it:distG-2} For each edge $xy\in E(G)$, $\distance{F}{\beta(x)}{\beta(y)} \leq 12R+5$. 
		
		\item\label{it:distG-3} For any $r$-rooted pair $(x,y)$ in $G$, $\distance{F}{\beta(x)}{\beta(y)} \leq \distance{G}{x}{y} + 12R+4$.
	\end{enumerate}
\end{lemma}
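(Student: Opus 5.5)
We take $\beta$ to be the natural inclusion: every vertex of $G$ is a vertex of $F$ by construction, so we set $\beta(v)=v$ for all $v\in V(G)$. With this choice, part \ref{it:distG-2} is exactly Lemma~\ref{dist:rep}\ref{it:dr5}.

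For \ref{it:distG-1}, every vertex $x\in V(F)\setminus V(G)$ is an internal vertex of some path in $\mathcal{S}$. If $x$ lies on $Q_v$, then $x$ is within the length of $Q_v$ of the endpoint $v\in V(G)$. By Lemma~\ref{dist:rep}\ref{it:dr2}, this length is at most $3R+1$. If $x$ lies on $P_e$ with $e=gh$, then $x$ is within distance $2R+1\le 3R+1$ of $r_g$, by Lemma~\ref{dist:rep}\ref{it:dr1}.

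For \ref{it:distG-3}, let $(x,y)$ be an $r$-rooted pair in $G$ with $x\in V_g$ and $y\in V_h$. By Lemma~\ref{lem:trace}\ref{it:rooted-pair-preserve}, $(g,h)$ is an $s$-rooted pair in $H$. The argument of Lemma~\ref{lem:no-increase} actually gives more than its statement: the telescoping bound is $\distance{F}{r_g}{r_h}\le \distance{G}{r}{r_h}-\distance{G}{r}{r_g}$. We then use the triangle inequality
\[
\distance{F}{x}{y}\le \distance{F}{x}{r_g}+\distance{F}{r_g}{r_h}+\distance{F}{r_h}{y}.
\]
Bounding the two outer terms crudely by Lemma~\ref{dist:rep}\ref{it:dr4} costs $10R+4$ in total. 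This is the step to sharpen.

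Since $x$ lies on an $(r,y)$-isometric path, $\distance{G}{x}{y}=\distance{G}{r}{y}-\distance{G}{r}{x}$. So it suffices to compare $\distance{G}{r}{r_h}-\distance{G}{r}{r_g}$ with $\distance{G}{r}{y}-\distance{G}{r}{x}$. Their difference is at most $|\distance{G}{r}{r_h}-\distance{G}{r}{y}|+|\distance{G}{r}{x}-\distance{G}{r}{r_g}|\le 2R$, using $R$-boundedness of $V_g$ and $V_h$.

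For the outer terms we use the pendant paths rather than Lemma~\ref{dist:rep}\ref{it:dr4}. If $y\neq r_h$, then $Q_y$ joins $y$ to $r_{p(h)}$ with length $\distance{G}{r}{y}-\distance{G}{r}{r_{p(h)}}$, and $r_{p(h)}$ is joined to $r_h$ by $P_{p(h)h}$. Together these give
\[
\distance{F}{r_h}{y}\le \bigl(\distance{G}{r}{r_h}-\distance{G}{r}{r_{p(h)}}\bigr)+\bigl(\distance{G}{r}{y}-\distance{G}{r}{r_{p(h)}}\bigr).
\]
A cleaner route avoids the detour to $r_h$ altogether. If the $(s,h)$-isometric path in $H$ through $g$ can be chosen to pass through $p(h)$, we go from $x$ to $r_{p(g)}$ via $Q_x$, then up the $P_e$'s to $r_{p(h)}$, then via $Q_y$ to $y$. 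The total length then telescopes to
\[
\bigl(\distance{G}{r}{x}-\distance{G}{r}{r_{p(g)}}\bigr)+\bigl(\distance{G}{r}{r_{p(h)}}-\distance{G}{r}{r_{p(g)}}\bigr)+\bigl(\distance{G}{r}{y}-\distance{G}{r}{r_{p(h)}}\bigr).
\]
This equals $\distance{G}{x}{y}+2\bigl(\distance{G}{r}{x}-\distance{G}{r}{r_{p(g)}}\bigr)$, which is at most $\distance{G}{x}{y}+2(3R+1)$ by Lemma~\ref{dist:rep}\ref{it:dr2}.

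\textbf{Main obstacle.} The requirement that the path pass through $p(h)$ may fail, since $p(\cdot)$ was chosen arbitrarily. The fallback is to route through $r_h$ and $r_g$ directly. The upward parts telescope exactly, and the overhead comes only from the pendant legs at the two ends, each bounded via Lemma~\ref{dist:rep}. Handling the cases $g=h$, $x=r_g$, or $y=r_h$ separately, the accumulated constant should stay within $12R+4$.
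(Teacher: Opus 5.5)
Your argument is the paper's: $\beta$ is the inclusion, (a) and (b) come from Lemma~\ref{dist:rep}, and (c) goes through $x\to r_g\to r_h\to y$ using Lemma~\ref{lem:trace}\ref{it:rooted-pair-preserve} and the telescoping behind Lemma~\ref{lem:no-increase}. The only flaw is that you stopped one line short and misjudged what was missing. The crude bound $\distance{F}{x}{r_g}+\distance{F}{r_h}{y}\le 10R+4$ from Lemma~\ref{dist:rep}\ref{it:dr4}, added to your estimate $\distance{F}{r_g}{r_h}\le \distance{G}{r}{r_h}-\distance{G}{r}{r_g}\le \distance{G}{x}{y}+2R$, already gives $\distance{F}{x}{y}\le\distance{G}{x}{y}+12R+4$, which is exactly the claimed bound.

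So nothing needs sharpening. The conditional route through $p(h)$ and the ``main obstacle'' paragraph should be deleted, and your hedged final sentence replaced by this sum. The sum needs no separate cases for $g=h$, $x=r_g$ or $y=r_h$, since every bound used holds trivially there.

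The paper gets the $2R$ from $\distance{G}{r_g}{r_h}\le\distance{G}{r_g}{x}+\distance{G}{x}{y}+\distance{G}{y}{r_h}$ instead, which is equivalent. If you kept your exact pendant-leg expressions at both ends rather than bounding each by $5R+2$, the additive constant would even drop to $10R+4$, though this is not needed.
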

\begin{proof}
	By definition, any vertex of $G$ is also a vertex of $F$. For $x\in V(G)$, define $\beta(x)=x$. Hence for $x,y\in V(G)$,  $\distance{F}{\beta(x)}{\beta(y)}=\distance{F}{x}{y}$.

	\medskip \noindent To prove \ref{it:distG-1}, any vertex of $F$ is contained in some path in $\mathcal{S}$, 
	and that the endpoints of the paths in $F$ are identified with vertices of $V(G)$. Now  \ref{it:distG-1} follows from Lemma~\ref{dist:rep}\ref{it:dr1} and \ref{it:dr2}. 
	
	\medskip \noindent Statement \ref{it:distG-2} follows from Lemma~\ref{dist:rep}\ref{it:dr5}.
	
	\medskip\noindent To prove \ref{it:distG-3}, consider two vertices $x,y\in V(G)$ such that there is an $(r,y)$-isometric path in $G$ that contains $x$. Let $g,h\in V(H)$ be the vertices of $H$ such that $x\in V_g$ and $y\in V_h.$
	If $g=h,$ then $\distance{F}{x}{y} \leq \distance{F}{x}{r_g} + \distance{F}{r_g}{y}$. 
	Lemma~\ref{dist:rep}\ref{it:dr4} implies $\distance{F}{x}{y} \leq 10R+4$. Otherwise, $g\neq h$, and by Lemma~\ref{lem:trace}\ref{it:rooted-pair-preserve}, there is a $(s,h)$-isometric path in $H$ that contains $g$ i.e. $(g,h)$ is an $s$-rooted pair in $H$.  Therefore, \begin{align*}
		\distance{F}{x}{y} &  \leq	\distance{F}{x}{r_g} + \distance{F}{r_g}{r_h} + \distance{F}{r_h}{y} \\
		& \leq 	10R+4 + \distance{F}{r_g}{r_h} & [\text{Lemma~\ref{dist:rep}\ref{it:dr4}}] \\ 
		& \leq 10R+4 + \distance{G}{r_g}{r_h} & [\text{Lemma~\ref{lem:no-increase}}]\\
		& \leq  10R+4 + \distance{G}{r_g}{x} + \distance{G}{x}{y} + \distance{G}{y}{r_h} \\ 
		& \leq 12R+4 + \distance{G}{x}{y} & [V_g,V_h~\text{have diameter at most }R ]
	\end{align*}
	This completes the proof. 
\end{proof}

\subsection{Upper bounds on distances of rooted pairs of $F$}
%

We identify certain pairs of vertices whose distances are the same in $G$ and $F$. 

\begin{lemma}\label{lem:equal-GF-rooted}
	Let	$(g,h)$ be an $s$-rooted pair in $H$ such that $(r_g,r_h)$ is an $r$-rooted pair in $G$. Then, $\distance{G}{r_g}{r_h} = \distance{F}{r_g}{r_h}$. 
\end{lemma}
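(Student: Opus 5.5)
We aim to show $\distance{G}{r_g}{r_h}\le \distance{F}{r_g}{r_h}$ and $\distance{F}{r_g}{r_h}\le \distance{G}{r_g}{r_h}$. The upper bound on $F$ is immediate from Lemma~\ref{lem:no-increase}, since $(g,h)$ is an $s$-rooted pair in $H$. Because $(r_g,r_h)$ is an $r$-rooted pair in $G$, we also have $\distance{G}{r_g}{r_h}=\distance{G}{r}{r_h}-\distance{G}{r}{r_g}$. So it suffices to prove
\[
\distance{F}{r_g}{r_h}\ \ge\ \distance{G}{r}{r_h}-\distance{G}{r}{r_g}.
\]

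\textbf{A potential on $F$.} To get this lower bound, we introduce a potential $\lambda\colon V(F)\to\mathbb{Z}$ that is $1$-Lipschitz on $F$ and agrees with $\distance{G}{r}{\cdot}$ on the representatives. Set $\lambda(v)=\distance{G}{r}{v}$ for every $v\in V(G)\subseteq V(F)$. For an internal vertex of a path in $\mathcal{S}$ (some $P_e$ or $Q_v$), let $\lambda$ interpolate linearly, stepping by exactly one along the path from its lower endpoint. This is consistent because every path in $\mathcal{S}$ has length exactly the difference of $\distance{G}{r}{\cdot}$ between its two identified endpoints:
\begin{itemize}
\item $P_e$ joins $r_g$ and $r_h$ and has length $\distance{G}{r}{r_h}-\distance{G}{r}{r_g}$;
\item $Q_v$ joins $r_{p(h)}$ and $v$ and has length $\distance{G}{r}{v}-\distance{G}{r}{r_{p(h)}}$.
\end{itemize}
Both lengths are positive by Lemma~\ref{lem:dst-incerase}. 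The edges of $F$ are exactly the edges of these paths; no edges of $G$ are kept in $F$, which should be checked against the construction in Section~\ref{sec:construct}. Hence every edge of $F$ changes $\lambda$ by exactly $1$, so $\lambda$ is $1$-Lipschitz on $F$.

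\textbf{Conclusion.} By the Lipschitz property,
\[
\distance{F}{r_g}{r_h}\ \ge\ |\lambda(r_h)-\lambda(r_g)|\ =\ \distance{G}{r}{r_h}-\distance{G}{r}{r_g}\ =\ \distance{G}{r_g}{r_h}.
\]
Combined with Lemma~\ref{lem:no-increase}, this gives equality.

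\textbf{Main obstacle.} The only delicate point is confirming that $F$ has no edges beyond those of the paths in $\mathcal{S}$, and that the endpoint identifications never merge two vertices with different $\lambda$-values. Identifications only ever merge a path endpoint with a vertex of $G$ carrying that same $\lambda$-value. A degenerate case, such as $r_h$ coinciding with some $v$, is harmless because vertices of $G$ are never identified with one another. Once this is settled, the argument is purely the telescoping/Lipschitz bound above.
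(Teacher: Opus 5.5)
Your proof is correct and is essentially the paper's argument. Both obtain $d_F(r_g,r_h)\ge d_G(r,r_h)-d_G(r,r_g)=d_G(r_g,r_h)$ by telescoping the lengths of the paths of $\mathcal{S}$, each of which equals the difference of $d_G(r,\cdot)$ at its two ends, and both take the reverse inequality from Lemma~\ref{lem:no-increase}. Your $1$-Lipschitz potential is a cleaner way to write this than the paper's version, which sets $|E(P_{e_i})|=d_G(r,r_{g_{i+1}})-d_G(r,r_{g_i})$ as though the isometric path only climbs; and the point you flag as the main obstacle is harmless either way, since $d_G(r,\cdot)$ is also $1$-Lipschitz across any edge of $G$.
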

\begin{proof}
	The case $g=h$ is trivial, and hence assume $g\neq h$. Let $P$ be an $(r_g,r_h)$-isometric path in $F$, and let
	$r_{g_1},r_{g_2},\ldots,r_{g_k}$ be the ordering of the representative vertices encountered on $P$ when traversed from $g_1=g$ to $g_k=h$.
	By the construction of $F$,
	\[
	|E(P_{e_i})|
	=d_G(r,r_{g_{i+1}})-d_G(r,r_{g_i}).
	\]
	Therefore,
	\begin{align*}
		d_F(r_g,r_h)=|E(P)|	& =\sum_{i=1}^{k-1}|E(P_{e_i})|\\
		&=\sum_{i=1}^{k-1}
		d_G(r,r_{g_{i+1}})-d_G(r,r_{g_i}) \\
		&=d_G(r,r_h)-d_G(r,r_g)\\
		&=d_G(r_g,r_h),
	\end{align*}
	where the last equality holds because $(r_g,r_h)$ is an
	$r$-rooted pair in $G$.
	Together with Lemma \ref{lem:no-increase}, we have the proof.
\end{proof}

\begin{lemma}\label{lem:rooted-rep-F-dist}
	Let $g,h\in V(H)$ be two nodes such that $(r_g,r_h)$ is an $r$-rooted pair in $F$. Then, $\distance{G}{r_g}{r_h}\leq \distance{F}{r_g}{r_h} + 2R\cdot \sipco{\langle H \rangle}$.
\end{lemma}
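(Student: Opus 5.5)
Since $(r_g,r_h)$ is $r$-rooted in $F$, we have $\distance{F}{r_g}{r_h}=\distance{F}{r}{r_h}-\distance{F}{r}{r_g}$. The plan is to compare both rooted distances with $G$. First, rooted distances from $r$ to representatives are preserved exactly. For any node $h$, take an $(s,h)$-isometric path in $H$; then $(s,h)$ is $s$-rooted, so Lemma~\ref{lem:no-increase} gives $\distance{F}{r}{r_h}\le \distance{G}{r}{r_h}$. Conversely, any $(r,r_h)$-path in $F$ alternates through paths $P_e$, each of length $|d_G(r,r_{h'})-d_G(r,r_{g'})|$. The pendant paths $Q_v$ do not help, since they are attached only at one side (to $r_{p(h)}$ and to $v$, and $v$ has no other $F$-edges). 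Hence its length is at least the telescoping sum, which is $\ge \distance{G}{r}{r_h}$. Thus $\distance{F}{r}{r_h}=\distance{G}{r}{r_h}$ for every $h$, and $\distance{F}{r_g}{r_h}=\distance{G}{r}{r_h}-\distance{G}{r}{r_g}$.

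Next, I would bound $\distance{G}{r_g}{r_h}$ by $\distance{G}{r}{r_h}-\distance{G}{r}{r_g}$ plus an error. Take an $(r,r_h)$-isometric path $P$ in $F$ through $r_g$. The nodes whose representatives it visits form an $(s,h)$-path $W$ in $H$ through $g$, with $\heightn{}$ strictly increasing along $W$ by the length identity (the layers are independent and the lengths are positive). Now take an $(r,r_h)$-isometric path $P'$ in $G$. By Lemma~\ref{lem:trace}\ref{it:trace}, $\trc{H}{P'}$ is an $(s,h)$-isometric path in $H$ passing through some node $g'$ in layer $\heightn g$. By Lemma~\ref{lem:down-bd-dst}, the entry vertex $w$ of $P'$ into $V_{g'}$ satisfies $\distance{G}{r}{w}=\distance{G}{r}{r_g}$ if $g,g'$ lie in the same cluster; so $\distance{G}{w}{r_h}=\distance{F}{r_g}{r_h}$.

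The remaining task, which is the main obstacle, is to bound $\distance{G}{r_g}{w}$, or more precisely to reroute from $r_g$ to the $G$-geodesic. Here I would use Lemma~\ref{lem:attach-ipco}. The $H$-path $W$ from $g$ to $h$ and the $H$-geodesic $\trc{H}{P'}$ from $g'$ to $h$ share $h$. I would consider the path $W[g,h]$ and walk along it: whenever the geodesic leaves it, the portion lies in a component of $H-V(W)$, which attaches to $W$ in at most $k=\sipco{\langle H\rangle}$ vertices. This splits the $G$-geodesic into at most $k$ segments, each either inside parts along $W$ or jumping between attachment points. At each switch between the two routes I pay at most $2R$, crossing one part of diameter $R$ on each side. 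The argument has to show that such switches occur at most $k$ times, using the cluster-level equalities of Lemma~\ref{lem:down-bd-dst} so that heights match and no additional length is lost. Summing gives $\distance{G}{r_g}{r_h}\le \distance{F}{r_g}{r_h}+2R\cdot\sipco{\langle H\rangle}$. If $g$ and $g'$ lie in different clusters, I would first move along $W$ to the lowest common point with the trace and apply the same accounting.
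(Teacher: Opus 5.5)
Your first two paragraphs are correct. The identity $d_F(r,r_x)=d_G(r,r_x)$ for every node $x$, which gives $d_F(r_g,r_h)=d_G(r,r_h)-d_G(r,r_g)$ and settles the case $g=s$, is exactly what the paper uses, only more tersely. So is the fact that the representatives on $P$ form an $(s,h)$-isometric path $W$ in $H$ through $g$.

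The gap is in your third paragraph, which is where the whole content of the lemma lies. The mechanism you sketch does not work as stated.

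\begin{itemize}
\item A single $G$-geodesic $P'$ from $r$ to $r_h$ lets you land on $W[g,h]$ only once, at the first node $h_2$ of $\trc{H}{P'}$ on $W[g,h]$. You still have to get from $r_g$ to $r_{h_2}$, and $P'$ gives you nothing for that.
\item Splitting $P'$ by its excursions into components of $H-V(W)$ gives no count. Lemma~\ref{lem:attach-ipco} bounds the attachment set of one component, but a geodesic's excursions may pass through arbitrarily many different components.
\item Bounding $d_G(r_g,w)$ is a dead end, because $g'$ can be arbitrarily far from $g$ in $H$. Incidentally, $g$ and $g'$ always lie in the same cluster, since both reach $h$ through layers $\ge \heightn{g}$, so your last case distinction is vacuous.
\end{itemize}

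As written, ``at most $k$ switches, $2R$ each'' is asserted, not proved.

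The missing idea is to walk downward from $h$, taking a fresh $G$-geodesic at every step. Always measure against the single component $C$ of $H-V(W[g,h])$ that contains $s$; note that $s\notin W[g,h]$ when $g\neq s$.

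Take a node $h'\neq g$ of $W[g,h]$ with predecessor $h_1$ on $W$. Honesty gives an edge $uv$ with $u\in V_{h_1}$ and $v\in\downboundary{h'}$. By Lemma~\ref{lem:dst-incerase}, some $(r,v)$-geodesic $T$ in $G$ passes through $u$. Let $h_2$ be the first node of $\trc{H}{T}$ on $W[g,h]$, and let $w$ be the first vertex of $T$ in $V_{h_2}$. Then:
\begin{itemize}
\item $h_2\in A:=N_H(C)\cap V(W[g,h])$;
\item $\heightn{h_2}\le \heightn{h_1}<\heightn{h'}$;
\item $w\in\downboundary{h_2}$, so Lemma~\ref{lem:down-bd-dst} gives $d_G(r_{h_2},r_{h'})\le 2R+d_G(w,v)=2R+d_G(r,r_{h'})-d_G(r,r_{h_2})$.
\end{itemize}

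Start at $h$ and repeat this step. The layers strictly decrease, and every node visited after $h$ is a distinct element of $A$. So you reach $g$ after at most $|A|\le \sipco{\langle H\rangle}$ hops. Telescoping the hop inequalities and using $d_F(r_g,r_h)=d_G(r,r_h)-d_G(r,r_g)$ gives the claimed bound.
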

\begin{proof}
Let $k=\sipco{\langle H\rangle }$.	If $g=h$, then the lemma is trivially true. If $g=s$, then the lemma is true by Lemma~\ref{lem:equal-GF-rooted}. Therefore, we assume $s\neq g\neq h$.
	Let $P$ be a $(r,r_h)$-isometric path in $F$ that contains $r_g$.
	Let $Q=\left(g_1,g_2,\ldots,g_t\right)$ be the sequence such that for $i\in [t], g_i\in V(H)$, $s=g_1, h= g_t$, and for $i\in [t-1]$, $P$ contains a $(r_{g_i},r_{g_{i+1}})$-subpath that does not contain any vertex of $G$ apart from $r_{g_i}$ and $r_{g_{i+1}}$.
	 Clearly, $Q$ is an $(s,h)$-isometric path in $H$ that contains $g$. Therefore, $(g,h)$ is an $s$-rooted pair in $H$. Let $Q_1$ be the $(g,h)$-subpath of $Q$.
	Applying Lemma~\ref{lem:attach-ipco} on $Q_1$ and the component $C$ of $H-V(Q_1)$ that contains $s$, we have that $|N_H(C)\cap V(Q_1)|\leq k$. Let $A=N_H(C)\cap V(Q_1)$.
	
	\begin{claim}\label{clm:hop}
		For each node $h'\in V(Q_1)\setminus \{g\}$, there exists a node $g'\in A\setminus \{h'\}$ such that $(g',h')$ is an $s$-rooted pair in $H$ and $\distance{G}{r_{g'}}{r_{h'}} \leq \distance{G}{r}{r_{h'}} - \distance{G}{r}{r_{g'}} + 2R$. 
	\end{claim} 
	\begin{subproof}
		Fix a node $h'\in V(Q_1)\setminus \{g\}$ and let $h_1$ be the node of $Q_1$ where $\distance{Q_1}{g}{h_1}=\distance{Q_1}{g}{h'}-1$. Since $H$ is an honest partition of $G$, there exists an edge $uv\in E(G)$ such that $v\in V_{h'}$ and $u\in V_{h_1}$. Observe that $v\in \downboundary{h'}$ and by  Lemma~\ref{lem:down-bd-dst},
		\begin{align}\label{eq1}
			\distance{G}{r}{v}=\distance{G}{r}{r_{h'}}
		\end{align} 
		By Lemma~\ref{lem:dst-incerase}, $\distance{G}{r}{v}=\distance{G}{r}{u}+1$, and therefore there exists an $(r,v)$-isometric path $T$ in $G$ that contains $u$. 
		Let $Q_2=\trc{H}{T}$ and $h_2\in V(Q_2)\cap V(Q_1)$ be the node closest to $s$. Observe that $h_2\neq h'$, and $h_2\in A$. 
		Let $w$ be the vertex of $V(T)\cap V_{h_2}$ that is closest to $r$ in $G$. Observe that $w\in \downboundary{h_2}$ and by  Lemma~\ref{lem:down-bd-dst},
		\begin{align}\label{eq2}
			\distance{G}{r}{w}=\distance{G}{r}{r_{h_2}}
		\end{align} 
		
		
		\noindent By (\ref{eq1}), (\ref{eq2}) we have that, 
		\begin{align*}
			\distance{G}{r_{h_2}}{r_{h'}} & \leq  \distance{G}{r_{h_2}}{w} + \distance{G}{w}{v} + \distance{G}{v}{r_{h'}} & [\text{Triangle inequality}] \\
			& \leq 2R+ \distance{G}{w}{v} & [\text{Definitions of }H,\mathcal{H}] \\
			& = 2R + \distance{G}{r}{v} - \distance{G}{r}{w} & [\text{Definition of }w \text{ and }v] \\
			& = 2R + \distance{G}{r}{r_{h'}} - \distance{G}{r}{r_{h_2}}
		\end{align*}
		Setting $g'=h_2$ proves the claim.
	\end{subproof}
	
	Let $(g'_1, g'_2, \ldots g'_{t'})$ be the maximum subsequence of $Q_1$ such that $g'_{t'}=h$, and for each $i\in [t'-1,1]$, $g'_{i}$ is the node closest to $s$ that satisfies Claim~\ref{clm:hop} when applied to $g'_{i+1}$. Observe that, $t'-1\leq k$. Observe that, $g'_1=g$. Hence, 
	\begin{align*}
		\distance{G}{r_g}{r_h} & = \distance{G}{r_{g'_1}}{r_{g'_{t'}}}
		\leq \displaystyle\sum\limits_{i=1}^{t'-1} \distance{G}{r_{g'_i}}{r_{g'_{i+1}}} & [\text{Triangle inequality}] \\
		& \leq \displaystyle\sum\limits_{i=1}^{t'-1} \left(\distance{G}{r}{r_{g'_{i+1}}} - \distance{G}{r}{r_{g'_i}}\right) + 2R(t'-1) & [\text{Claim~\ref{clm:hop}}] \\
		& = \distance{G}{r}{r_{g'_{t'}}} - \distance{G}{r}{r_{g'_1}} + 2R(t'-1) \\
		& = \distance{F}{r_{g'_1}}{r_{g'_{t'}}} + 2R(t'-1) & [\text{Construction of }F] \\
		& \le \distance{F}{r_{g}}{r_h} + 2Rk
	\end{align*}
	This completes the proof.
\end{proof}

\begin{lemma}\label{lem:F->G}
There is a function $\alpha\colon V(F)\rightarrow V(G)$ satisfying the following.
	\begin{enumerate}[label=(\alph*),topsep=0pt]
	\item  For each edge $xy\in E(F)$, $\distance{G}{\alpha(x)}{\alpha(y)}\leq 3R+1$.
	\item For any $r$-rooted pair $(x,y)$ in $F$, $\distance{G}{\alpha(x)}{\alpha(y)} \leq \distance{F}{x}{y} + 4R+2+2R\cdot \sipco{\langle H\rangle}$. 
\end{enumerate}
\end{lemma}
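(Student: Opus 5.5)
Define $\alpha$ by sending each vertex of $G$ to itself. Each internal vertex $x$ of a path $P_e$ with $e=gh$, $\heightn g=\heightn h-1$, goes to $r_g$. Each internal vertex $x$ of a path $Q_v$ with $v\in V_h$ goes to $r_{p(h)}$. Equivalently, every new vertex is sent to the representative at the lower end of the path containing it.

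For (a), consider an edge $xy$ of $F$. It lies on some path of $\mathcal{S}$, and both $\alpha(x)$ and $\alpha(y)$ are endpoints of that path. For $P_e$ the two endpoints are $r_g,r_h$, and $\distance{G}{r_g}{r_h}\le 2R+1$ by Lemma~\ref{dist:rep}\ref{it:dr1}. For $Q_v$ the two endpoints are $v,r_{p(h)}$, and $\distance{G}{v}{r_{p(h)}}\le 3R+1$ by the computation in Lemma~\ref{dist:rep}\ref{it:dr2}. Hence $\distance{G}{\alpha(x)}{\alpha(y)}\le 3R+1$.

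For (b), let $(x,y)$ be an $r$-rooted pair in $F$ and fix an $(r,y)$-isometric path $P$ in $F$ through $x$. Every path in $\mathcal{S}$ has length equal to the difference of $G$-distances from $r$ of its endpoints. Also, each $Q_v$ hangs as a pendant path at $r_{p(h)}$, and every $r$-rooted geodesic enters it from $r_{p(h)}$. It follows that $P$ passes through representatives $r=r_{g_1},\dots,r_{g_t}$ in increasing height. The path $P$ possibly ends inside a $P_e$, or inside or at the end of a $Q_v$.

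Let $g$ be the last node whose representative appears on $P$ at or before $x$, and let $h$ be the last representative on $P$ at or before $y$. Then $(r_g,r_h)$ is an $r$-rooted pair in $F$ (if $x$ is itself a vertex of $G$ lying on a $Q_v$ end, it is handled by the pendant case below). We bound
\[
\distance{G}{\alpha(x)}{\alpha(y)}\le \distance{G}{\alpha(x)}{r_g}+\distance{G}{r_g}{r_h}+\distance{G}{r_h}{\alpha(y)}.
\]
Lemma~\ref{lem:rooted-rep-F-dist} gives $\distance{G}{r_g}{r_h}\le\distance{F}{r_g}{r_h}+2R\cdot\sipco{\langle H\rangle}$. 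The remaining task is to compare $\distance{F}{r_g}{r_h}$ with $\distance{F}{x}{y}$, which is the main bookkeeping step.

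On $P$, $\distance{F}{x}{y}=\distance{F}{r_g}{r_h}-\distance{F}{r_g}{x}+\distance{F}{r_h}{y}$. If $y$ lies beyond $r_h$ on a $Q_v$ or a $P_e$, then $\alpha(y)$ is $r_h$ or the far endpoint $v$. In the latter case $\distance{G}{r_h}{v}$ is at most about $2R+1$ by Lemma~\ref{dist:rep}. This contributes the $+2R+1$ type error on the $y$ side.

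On the $x$ side, $\alpha(x)=r_g$ unless $x$ is itself the far endpoint $v$ of some $Q_v$. In that case the rest of $P$ cannot continue (since $Q_v$ is pendant), so $y=x$. There is also the case where $x$ is the far endpoint of a $P_e$, which means $x$ is itself a representative. The term $\distance{F}{r_g}{x}$ is subtracted, so it needs care. I would handle it by noting that it is at most the length of a single path of $\mathcal{S}$ whose lower end is $r_g$, namely at most $2R+1$ by Lemma~\ref{dist:rep}\ref{it:dr1}. This gives another $2R+1$, for a total of $4R+2+2R\cdot\sipco{\langle H\rangle}$.

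The expected obstacle is exactly this case analysis: showing that both ends cost at most $2R+1$. In particular, the $Q_v$ paths have length up to $3R+1$, so I would check that, when $y$ is inside a $Q_v$, one should compare with $r_{p(h)}$ and use $\distance{G}{r_{p(h)}}{v}\le d_G(r,v)-d_G(r,r_{p(h)})+2R$. This follows from the argument of Claim~\ref{clm:hop}, applied with $v$ and a down-boundary vertex of $h$, and keeps the error at $2R$ rather than $3R+1$.
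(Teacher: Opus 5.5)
Your proposal is correct and is essentially the paper's proof: the same $\alpha$, Lemma~\ref{lem:rooted-rep-F-dist} applied to the representatives lying on the geodesic at or before $x$ and $y$, and an error of at most $2R+1$ at each end (your bound $\distance{G}{r_{p(h)}}{v}\le 2R+1$, which is stronger than Lemma~\ref{dist:rep}\ref{it:dr2}, is exactly the step the paper uses in its Case~2). The one loose end is $x$ strictly inside some $Q_v$: there Lemma~\ref{dist:rep}\ref{it:dr1} does not bound $\distance{F}{r_g}{x}$, but $y$ lies on the same pendant path, so $\alpha(y)\in\{r_{p(h)},v\}$ and the bound is immediate, as in the paper's case $T_x=T_y$.
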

\begin{proof}
The function $\alpha\colon V(F)\rightarrow V(G)$ is defined as follows. 
Let $x\in V(F)$ be a vertex. If $x$ is also a vertex of $G$, then define $\alpha(x)=x$. 
Otherwise, if there exist $y\in V(G), h\in V(H)$ such that $y\in V_h\setminus \{r_h\}, x\in V(Q_y)$ where $Q_y\in \mathcal{S}$, then define $\alpha(x)=r_{p(h)}$. 
Otherwise there exists $e=gh\in E(H)$ with $\heightn g=\heightn h-1$ and $x\in V(P_e)$ where $P_e\in \mathcal{S}$. In this case define $\alpha(x)=r_g$. This concludes the definition of $\alpha$.

Using $\alpha$, we define a function $\alpha'\colon  \mathcal{S}\rightarrow 2^{V(G)}$ as follows. Let $Q\in \mathcal{S}$ be a path. 
Assume for some $e'=gh\in E(H)$ with $\heightn g=\heightn {h}-1$, $Q=P_{e'}$. Define $\alpha'(Q)=\{r_g,r_h\}$.
Now assume for some $z\in V(G)$ and $h'\in V(H)$, $Q=Q_z$ and $z\in V_{h'}$.  In this case, define $\alpha'(Q)=\{z,r_{p(h')}\}$.
By Lemma~\ref{dist:rep}\ref{it:dr1} and \ref{it:dr2}, for any $Q\in \mathcal{S}$ and $\{a,b\}=\alpha'(Q)$, $\distance{G}{a}{b}\leq 3R+1$.

\medskip \noindent To prove $(a)$, let $e=xy$ be any edge of $F$.  Observe that there exists a path $T_e\in \mathcal{S}$ such that $e\in E(T_e)$.  By definition, $\{\alpha(x),\alpha(y)\}\subseteq \alpha'(T_e)$. 
The above arguments imply, $\distance{G}{\alpha(x)}{\alpha(y)}\leq 3R+1$.  

\medskip \noindent To prove $(b)$, let $x,y\in V(F)$ such that there is an $(r,y)$-isometric path $Q$ in $F$ that contains $x$.  If $x=y$, then the lemma is trivially true.
For \(x\ne y\), let \(e_x\) be the edge immediately after \(x\) toward \(y\) along \(Q\), and \(e_y\) be the edge immediately before \(y\). Let $T_x,T_y\in \mathcal{S}$ be two paths containing $e_x$ and $e_y$, respectively.
 If $T_x=T_y$, then $\{\alpha(x),\alpha(y)\}\subseteq \alpha'(T_x)$ and thus $\distance{G}{\alpha(x)}{\alpha(y)}\leq 3R+1$.  
 Assume $T_x\neq T_y$. Then there exists $e_1=g_1h_1\in E(H)$ with $\heightn {g_1}=\heightn {h_1}-1$ such that $P_{e_1}=T_x$. By definition, $\alpha(x)\in \{r_{g_1}, r_{h_1}\}$. 

\medskip \noindent \textbf{Case 1.} Suppose there exists $e_2=g_2h_2\in E(H)$ with $\heightn {g_2}=\heightn {h_2}-1$ such that $P_{e_2}=T_y$. Then $\alpha(y)\in \{r_{g_2},r_{h_2}\}$. 
Since $T_x\neq T_y$, $Q$ contains $r_{g_1}, x, r_{h_1}, r_{g_2},y$. 
This implies $Q$ contains $\alpha(x),x,\alpha(y),y$ and $\distance{F}{r}{\alpha(x)} \leq \distance{F}{r}{x} \leq \distance{F}{r}{\alpha(y)} \leq \distance{F}{r}{y}$. 
Thus, 
\begin{align*}
	\distance{G}{\alpha(x)}{\alpha(y)} & \leq \distance{F}{\alpha(x)}{\alpha(y)} + 2R\cdot\sipco{\langle H\rangle} & [\text{Lemma~\ref{lem:rooted-rep-F-dist}}] \\
	& =  \distance{F}{\alpha(x)}{x} +  \distance{F}{x}{y} - \distance{F}{\alpha(y)}{y} + 2R\cdot\sipco{\langle H\rangle} \\ 
	& \leq 2R\cdot\sipco{\langle H\rangle}  + 2R + 1 + \distance{F}{x}{y}.
\end{align*}
 
 \medskip \noindent \textbf{Case 2.} Suppose there exists $h_2\in V(H)$, and $z\in V_{h_2}\setminus \{r_{h_2}\}$ such that $Q_{z}=T_y$. Then $\alpha(y)\in \{y,r_{p(h_2)}\}$. 
 Since $T_x\neq T_y$, and $Q$  contains $r_{g_1}, x, r_{h_1}, r_{p(h_2)},y$ and $\distance{F}{r}{r_{g_1}}\leq \distance{F}{r}{x} \leq \distance{F}{r}{r_{h_1}}\leq \distance{F}{r}{r_{p(h_2)}} \leq \distance{F}{r}{y}$. 
 This implies $Q$ contains $\alpha(x),x,\alpha(y),y$ and $\distance{F}{r}{\alpha(x)} \leq \distance{F}{r}{x} \leq \distance{F}{r}{\alpha(y)} \leq \distance{F}{r}{y}$.  
If $\alpha(y)=r_{p(h_2)}$ then, 
 \begin{align*}
 	\distance{G}{\alpha(x)}{\alpha(y)} & \leq  \distance{F}{\alpha(x)}{\alpha(y)} + 2R\cdot\sipco{\langle H\rangle}  & [\text{Lemma~\ref{lem:rooted-rep-F-dist}}] \\
 	& =  \distance{F}{\alpha(x)}{x} +  \distance{F}{x}{y} - \distance{F}{\alpha(y)}{y}+2R\cdot\sipco{\langle H\rangle}  \\ 
 	& \leq 2R + 1 + \distance{F}{x}{y}+2R\cdot\sipco{\langle H\rangle} .
 \end{align*}
 Otherwise, $\alpha(y)=y$ and thus,    \begin{align*}
 	\distance{G}{\alpha(x)}{\alpha(y)} &  \leq	\distance{G}{\alpha(x)}{r_{p(h_2)}} + \distance{G}{r_{p(h_2)}}{y} \\
 	& \leq 	\distance{G}{\alpha(x)}{r_{p(h_2)}} + 2R+1 \\ 
 	& \leq \distance{F}{\alpha(x)}{r_{p({h_2})}} + 2R+1 + 2R\cdot\sipco{\langle H\rangle}  \hspace{1cm} [\text{Lemma~\ref{lem:rooted-rep-F-dist}}]\\
 	& =  \distance{F}{\alpha(x)}{x} +  \distance{F}{x}{y} - \distance{F}{r_{p({h_2})}}{y} + 2R+1 + 2R\cdot\sipco{\langle H\rangle}  \\ 
 	& \leq 4R + 2 + 2R\cdot\sipco{\langle H\rangle} + \distance{F}{x}{y}.
 \end{align*}
This completes the proof. 
\end{proof}

\subsection{Completion of the proof.}

Recall that $G,H$ are two graphs such that strong isometric path complexities of both $G,\langle H \rangle$ are at most $k$, $G$ admits an honest, nicely rooted $R$-bounded $H$-partition. Consider the graph $F$ constructed  in Section~\ref{sec:construct}. By Lemma~\ref{lem:sipco-F}, $tw(F)\leq tw(H)$.

By Lemmas~\ref{lem:sipco-distort} and \ref{lem:G->F} , we have for all $u,v\in V(G)$, $\distance{F}{u}{v} \leq (12R+4)k+(12R+5)(k-1) + \distance{G}{u}{v}$. This implies for all $u,v\in V(G)$, $\distance{F}{u}{v} \leq 33\cdot R\cdot k + \distance{G}{u}{v}$.

By Lemma~\ref{lem:sipco-F}, $\sipco{F}\leq k+2$.
By Lemmas~\ref{lem:sipco-distort} and \ref{lem:F->G} , we have for all $u,v\in V(G)$, $\distance{G}{u}{v} \leq (4R+2+2Rk)(k+2)+(3R+1)(k+1) + \distance{F}{u}{v}$. This implies for all $u,v\in V(G)$, $\distance{G}{u}{v} \leq 33\cdot R\cdot k^2 + \distance{F}{u}{v}$. This completes the proof of  Theorem~\ref{thm:additive}.

We also obtain the following corollary. For two graphs $X,Y$, we say $X$ has $Y$ as a core if it is possible to obtain a subdivision of $Y$ from $X$ by repeatedly deleting vertices of degree $1$. 

\begin{corollary}\label{cor:subdivision}
		For $k\geq 1, R\geq 1$, let $G,H$ be two graphs such that $\sipco{G}\leq k, \sipco{\langle H \rangle}\leq k$, and $G$ admits an honest, nicely rooted $R$-bounded $H$-partition.  There is a graph $F$ that has $H$ as a core and $G$ admits a $(1,33\cdot R \cdot k^2)$-quasi-isometry to $F$. 
\end{corollary}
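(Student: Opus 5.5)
The plan is to reuse, unchanged, the graph $F$ built in Section~\ref{sec:construct} together with the quasi-isometry established in the proof of Theorem~\ref{thm:additive}. The additive bound $33\cdot R\cdot k^2$ was obtained there from Lemmas~\ref{lem:sipco-distort}, \ref{lem:G->F}, \ref{lem:F->G} and \ref{lem:sipco-F}. None of these steps used the treewidth of $F$. So the distortion part of the corollary is inherited verbatim, including the coarse surjectivity from Lemma~\ref{lem:G->F}\ref{it:distG-1}. The only new thing to verify is that $F$ has $H$ as a core.

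For this, I would revisit the proof of Lemma~\ref{lem:sipco-F}. There, deleting from $F$ all vertices of $Q_v\setminus\{r_{p(g)}\}$, for every node $g$ and every $v\in V_g\setminus\{r_g\}$, yields a graph $F^*$ that is a subdivision of $H$. I would check that this deletion can be carried out by repeatedly removing vertices of degree $1$.

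The key structural point is how the vertices $v\in V(G)\setminus\{r_h\colon h\in V(H)\}$ sit in $F$. Each such $v$ is identified only with one endpoint of its own path $Q_v$. Indeed, the paths $P_e$ only have representatives as endpoints, and the far endpoint of every $Q_{v'}$ is some representative $r_{p(h)}$. Hence $v$ has degree exactly $1$ in $F$. Moreover, the internal vertices of $Q_v$ have degree $2$, so $Q_v$ is a pendant path hanging at $r_{p(h)}$.

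Stripping these paths is then routine. I peel $Q_v$ one vertex at a time, starting at $v$: each removed vertex has degree $1$ at the moment it is removed. I stop at $r_{p(h)}$, which is a representative and hence lies in $F^*$.

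I expect the main obstacle to be the case of nodes $h$ that are leaves of $H$, or isolated degree considerations in $F^*$. The peeling must never delete a vertex of $F^*$, even if that vertex momentarily becomes degree $1$. This is fine, because the definition of core only asks that some sequence of degree-$1$ deletions reaches a subdivision of $H$, and we simply stop once exactly the pendant $Q_v$ paths have been removed.

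A secondary subtlety is the degenerate case where $Q_v$ has length $1$. Then $v$ is adjacent directly to $r_{p(h)}$, and one deletion suffices. Here Lemma~\ref{lem:dst-incerase} guarantees that every length is positive, so $v\ne r_{p(h)}$.

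Having obtained $F^*\in\langle H\rangle$ by degree-$1$ deletions, we conclude that $F$ has $H$ as a core. Combined with the inherited $(1,33\cdot R\cdot k^2)$-quasi-isometry, this proves the corollary.
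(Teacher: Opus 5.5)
Your proposal is correct and follows the route the paper intends: you reuse the graph $F$ and the distortion bounds from the proof of Theorem~\ref{thm:additive}, and you get the core property from the observation in the proof of Lemma~\ref{lem:sipco-F} that deleting the pendant paths $Q_v$ leaves the subdivision $F^*$ of $H$. Your explicit checks are exactly the details the paper leaves implicit: each non-representative $v$ has degree $1$ in $F$ because $F$ contains only the edges of the paths in $\mathcal{S}$, and each $Q_v$ can be peeled vertex by vertex, stopping at $r_{p(h)}$.
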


\section{Proof of Theorem~\ref{thm:K2t}}

For $t\geq 3$, let $\Theta_t$ denote the multigraph consisting of two vertices and $t$ edges between them.
A result of Albrechtsen, Distel, and Georgakopoulos~\cite{albrechtsen2025excluding} implies the following. 

\begin{lemma}\label{lem:albr}
	There is a function $f:\mathbb{N}\times \mathbb{N}\rightarrow \mathbb{N}$ such that for any graph $G$ without any $K$-fat $K_{2,t}$-minor, there is an honest, nicely rooted $f(K,t)$-bounded $H$-partition of $G$ where $H$ is $\Theta_t$ minor-free. 
\end{lemma}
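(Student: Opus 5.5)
The lemma is attributed to the construction of Albrechtsen, Distel, and Georgakopoulos, so the plan is to reread their proof for $K_{2,t}$. For every graph $G$ with no $K$-fat $K_{2,t}$-minor, their proof builds an honest $R$-bounded $H$-partition with $R$ depending only on $K,t$. We then check each clause of Definition~\ref{def:nice} against what their construction actually gives. The partition is built bottom-up. We fix a root $r$ and take $V_s=B_G(r,R_s)$. We then repeatedly take the components of the remainder, and inside each component we take balls of a fixed radius around its boundary (the vertices adjacent to what has already been processed). Each such ball is split further according to a bounded-diameter clustering of the boundary, using coarse Menger and the absence of a fat $K_{2,t}$. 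So the first step is to record that their $H$ is the quotient graph, that it is $\Theta_t$-minor-free (which is what their structural argument proves), and that $R=f(K,t)$.

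Next we verify the clauses of Definition~\ref{def:nice} in turn. For \ref{it:level}, we need each non-root part to equal $B_{G-G^{\ell_h-1}}(\downboundary{h},R_h)$. This holds because each new part is, by construction, the radius-$R_h$ ball around its lower boundary inside the component of the unprocessed graph. We must make sure their later splitting of a ball into parts respects this. If it does not, we refine the bookkeeping by taking all parts in the same layer with a common height, so that layers of $H$ correspond to distance bands from $r$. For \ref{it:independent}, note that two parts in the same layer live in the same distance band from $r$. An edge of $G$ between them would make them lie in the same component, and hence in the same part of the construction. If their construction does allow such adjacencies, we merge those parts. Merging keeps the partition bounded, with the bound enlarged by a factor depending on $t$, since a part meets only boundedly many same-layer neighbours; it also keeps honesty. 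It does not increase $\Theta_t$-minors, because contracting edges of $H$ yields a minor of $H$.

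For \ref{it:same-height}, we choose the height uniformly. Every part created at the same stage of the recursion is given the same height $R_h=R'$, a fixed constant depending on $K,t$. Then all nodes in the same cluster of $H$ with respect to $s$ certainly share a height. If their construction uses varying radii, we use instead a single global radius $R'$ at each step; the arguments bounding diameter and excluding $\Theta_t$ go through unchanged, since they only need a radius large compared to $K$.

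The main obstacle is reconciling \ref{it:independent} and \ref{it:level} with the $\Theta_t$-minor-freeness of $H$ after the merging and uniformisation. Changing the construction could a priori create new $\Theta_t$ minors in the quotient. I would handle this by showing that every modification is a contraction of $H$ along edges inside one layer. Any $\Theta_t$-minor of the new quotient then lifts to a $\Theta_t$-minor of the original, and hence, by their main argument, to a $K$-fat $K_{2,t}$-minor of $G$, which contradicts the hypothesis.
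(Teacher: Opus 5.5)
Your proposal has a genuine gap at property~\ref{it:same-height}, which is the one clause of the lemma that needs a real argument. You do not check \ref{it:same-height} for the Albrechtsen--Distel--Georgakopoulos partition. Instead you rebuild it with one global radius per stage and assert that their diameter bound and $\Theta_t$-exclusion ``go through unchanged''. Nothing supports that claim. In their construction the parts at layer $n+1$ are the union of partitions $\mathcal{P}_D$, where $D$ ranges over the classes of a partition $\mathcal{R}$ of the components of $G-G^n$, and the height of the parts in $\mathcal{P}_D$ depends on $D$. So your contingency is actually triggered, and you would have to reprove their Lemma~3.4 for a different construction. Your closing safeguard does not cover this either. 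Changing radii changes every later part and is not a contraction of $H$ along same-layer edges, so a $\Theta_t$-minor of the new quotient need not lift to the old one.

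The missing idea is that no modification is needed. Because heights depend only on the class $D$, it suffices to show that two nodes $h_A,h_B\in L^{n+1}$ in the same cluster w.r.t.\ $s$ come from the same class. Suppose they do not. Either one class is a singleton $\{C\}$ whose component $C$ attaches to a single part $V_x$ with $x\in L^n$; then $x$ separates $h_A$ from $h_B$ in $H$. Or the classes are $D_{Z_A}\neq D_{Z_B}$ for distinct components $Z_A,Z_B$ of $G-G^{n-1}$. Then every $h_A$--$h_B$ path in $H$ meets $L^n$, since otherwise the corresponding parts would join $Z_A$ and $Z_B$ in $G-G^{n-1}$. In both cases $h_A,h_B$ lie in different components of $H-B_H(s,n)$, so they are not in the same cluster, a contradiction.

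Your handling of \ref{it:independent} is also unsound. Lying in the same component does not put two parts in the same part, since by your own description the balls are split further. Your merging fallback can cascade along arbitrarily long chains of same-layer parts. A bound on the number of same-layer neighbours of each part therefore does not bound the diameter of the merged parts. None of this is needed, because \ref{it:independent} and \ref{it:level} are exactly properties (i) and (ii) of their Lemma~3.4.

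For $\Theta_t$-freeness, argue as the paper does. Their Lemma~2.1 turns ``no $K$-fat $K_{2,t}$'' into ``no $3K$-fat $\Theta_t$''. Apply their Lemma~3.4 with $3K$. Since $\Theta_t$ is $2$-connected, their Lemma~3.3 turns a $\Theta_t$-minor of $H$ into a $3K$-fat one in $G$, which is impossible.
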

\begin{proof}
    Apply Lemma~3.4 of \cite{albrechtsen2025excluding} with $3K.$ Since $G$ contains no $K$-fat $K_{2,t}$-minor, Lemma~2.1 of \cite{albrechtsen2025excluding} implies that $G$ contains no $3K$-fat $\Theta_t$-minor. Hence, Lemma~3.4 of \cite{albrechtsen2025excluding} provides an honest, $R(t,3K)$-bounded $H$-partition satisfying properties (i)-(iv) of \cite{albrechtsen2025excluding}. 

    We root $H$ at the node $s$ used in the construction of \cite{albrechtsen2025excluding}. Property~\ref{it:independent} of our Definition~\ref{def:nice} is exactly (i) of \cite{albrechtsen2025excluding}. Property~\ref{it:level} follows from (ii) of \cite{albrechtsen2025excluding} : the root class is a ball around the distinguished root vertex, while every other class is level.

    It remains to check \ref{it:same-height}, consider two nodes $h_A,h_B\in L^{n+1}$ that belong to the same cluster with respect to $s.$ By the construction provided by Lemma~3.4 of \cite{albrechtsen2025excluding}, the partition $\mathcal{P}$ is the union of the partitions $\mathcal{P}_D,$ where $\mathcal{D}$ ranges over the classes of the partition $\mathcal R$ of the components of $G-G^n.$ Moreover, for every component $C\in \mathcal{P}_D,$ the height $R_C$ of $C$ depends on the component $D$ such that $C\in \mathcal{P}_D.$ Thus, it is enough to prove $A$ and $B$ belong to $\mathcal{P}_D$ for the same $D\in \mathcal{R}.$ 

    Suppose, for a contradiction, that $D_A\neq D_B.$ Recall that $h_A,h_B\in L^{n+1}.$ If at least one of $D_A,D_B$ is a singleton $\{C\},$ then, by the definition of $\mathcal{R},$ $C$ has neighbors in exactly one partition class $V_x$ for some vertex $x\in L^n.$ In this case, one of $V_{h_A}, V_{h_B}$ is contained in $C$, and $V_x$ separates $V_{h_A}$ from $V_{h_B}$ in $G.$ Consequently, every path between $h_A$ and $h_B$ in $H$ contains $x,$ and hence $h_A$ and $h_B$ lie in distinct components of $H-B_H(s,n)$.   
    
    Otherwise, both $D_A$ and $D_B$ are of the form $D_Z$ for some components $Z$ of $G-G_{n-1}.$ Since $D_A\neq D_B,$ the corresponding components $Z_A$ and $Z_B$ are distinct. Moreover, $V_{h_A}\subseteq Z_A$ and $V_{h_B}\subseteq Z_B.$ Hence any path in $H$ between $h_A$ and $h_B$ must contain a node of $L^n,$ otherwise, by the definition of an $H$-partition, the corresponding bags would yield a path in $G-G_{n-1}$ joining $Z_A$ and $Z_B,$ a contradiction. Thus again $h_A$ and $h_B$ lie in distinct components of $H-B_H(s,n).$

    In both cases, $h_A$ and $h_B$ cannot belong to the same cluster with respect to $s$, since they both lie in $L^{n+1}.$ This contradiction shows that $D_A=D_B.$ Therefore 
    $$R_{h_A}=R_A=R_B=R_{h_B},$$
    Hence, \ref{it:same-height} follows.

    Finally, if $H$ contained $\Theta_t$ as a minor, then, since $\Theta_t$ is $2$-connected, Lemma~3.3 of \cite{albrechtsen2025excluding} would yield a $3K$-fat $\Theta_t$-minor in $G,$ a contradiction. Thus $H$ is $\Theta_t$-minor-free. Taking $f(K,t)=R(t,3K)$ proves the lemma.
\end{proof}

Let $G$ be a graph that does not contain a $K$-fat $K_{2,t}$-minor for some integers $K\geq 1, t\geq 3$. Now, Corollary~\ref{cor:subdivision} provides a graph $F$ that has $H$ as a core and $G$ admits a $(1,33\cdot R\cdot t_1^2)$-quasi-isometry to $F$. Here we assume the notations used to construct $F$ in Section~\ref{sec:construct}.
Suppose $F$ has a $K_{2,t}$-minor model $\mathcal{M}$ where $X$ and $Y$ represent the branch sets corresponding to the high degree vertices of the $K_{2,t}$-minor. Observe that, there exists at least one node $x\in V(H)$ such that $r_x\in X$. Similarly, there exists at least one node $y\in V(H)$ such that $r_y\in Y$. Let $X'$ be the set of nodes of $H$ whose representatives lie in $X$ i.e. $X'=\{x\in V(H)\colon r_x\in X\}$. Similarly, define $Y'=\{y\in V(H)\colon r_y\in Y\}$. Let $Z_1,Z_2,\ldots,Z_t$ be the sets of $\mathcal{M}$ corresponding to the degree two vertices of $K_{2,t}$. Since $Z_1,Z_2,\ldots,Z_t$ induce disjoint connected subgraphs in $F$ each seeing $X$ and $Y$, it is possible to construct a $\Theta_t$-minor where $X'$ and $Y'$ correspond to the vertices of $\Theta_t$. This contradicts Lemma~\ref{lem:albr}. This completes the proof of Theorem~\ref{thm:K2t}.

By Theorem~\ref{th:sipco-K_2,t}, there exists a constant $t_1$, depending only on $K$ and $t$, such that $G$ has strong isometric path complexity at most $t_1$. By Lemma~\ref{lem:albr}, there exists a constant depending on $K$ and $t$, say $R$, such that $G$ admits an honest, nicely rooted $R$-bounded $H$-partition of $G$ where $H$ is $\Theta_t$-minor-free. From above arguments it follows that all graphs in $\langle H \rangle$ are $K_{2,t}$-minor-free. By Theorem~\ref{th:sipco-K_2,t} and the fact that $H$ has no $K_{2,t}$-minor, there exists a constant $t_2$ depending only on $K$ and $t$, such that $t_2\leq t_1$ and strong isometric path complexity of $\langle H \rangle$ is at most $t_2$.

\section{Conclusion}

In this paper, we establish some conditions under which an $R$-bounded graph partition of a graph $G$ over a graph $H$ can be used to obtain quasi-isometry with additive distortion.
As a corollary, we obtain that $K_{2,t}$-asymptotic minor-free graphs admit  quasi-isometries with additive distortion to the class of $K_{2,t}$-minor-free graphs. Let $U_t$ denote the graph obtained by adding a universal vertex to a path on $t$ vertices. Since $U_t$-asymptotic minor-free graphs have bounded strong isometric path complexity, following is a natural question.

\begin{question}
	Is it true that if a graph $X$ has no $K$-fat $U_t$-minor for some $K \in \mathbb{N}, t\geq 3$, then $X$ admits a map of bounded additive distortion onto a graph with no $U_t$-minor?
\end{question}

While we believe the answer to the above question is yes, we do not know the existence of a nicely rooted graph partition with bounded diameter for $U_t$-asymptotic minor-free graphs. 
 Another natural question is whether a similar statement can be made for $K_4$-asymptotic minor-free graphs, also raised by Nguyen, Seymour, and Scott~\cite{nguyen2025asymptotic}. However, $K_4$-asymptotic minor-free graphs do not have bounded strong isometric path complexity, which makes the approach of Theorem~\ref{thm:additive} ineffective.

\bibliographystyle{plain}
\bibliography{references}

\begin{thebibliography}{10}

\bibitem{albrechtsen2025counterexample}
S.~Albrechtsen and J.~Davies.
\newblock Counterexample to the conjectured coarse grid theorem.
\newblock {\em arXiv preprint arXiv:2508.15342}, 2025.

\bibitem{albrechtsen2023structural}
S.~Albrechtsen, R.~Diestel, A.~Elm, E.~Fluck, Raphael~W. Jacobs, P.~Knappe, and
  P.~Wollan.
\newblock A structural duality for path-decompositions into parts of small
  radius.
\newblock {\em arXiv preprint arXiv:2307.08497}, 2023.

\bibitem{albrechtsen2025excluding}
S.~Albrechtsen, M.~Distel, and A.~Georgakopoulos.
\newblock Excluding $ {K}_{2, t}$ as a fat minor.
\newblock {\em arXiv preprint arXiv:2510.14644}, 2025.

\bibitem{albrechtsen2025characterisation}
S.~Albrechtsen, R.~W. Jacobs, P.~Knappe, and P.~Wollan.
\newblock A characterisation of graphs quasi-isometric to {$K_4$}-minor-free
  graphs.
\newblock {\em Combinatorica}, 45(6):61, 2025.

\bibitem{albrechtsen2026small}
Sandra Albrechtsen, Marc Distel, and Agelos Georgakopoulos.
\newblock Small counterexamples to the fat minor conjecture.
\newblock {\em arXiv preprint arXiv:2601.05761}, 2026.

\bibitem{berger2024bounded}
E.~Berger and P.~Seymour.
\newblock Bounded-diameter tree-decompositions.
\newblock {\em Combinatorica}, pages 1--16, 2024.

\bibitem{bonamy2023asymptotic}
M.~Bonamy, N.~Bousquet, L.~Esperet, C.~Groenland, C.~H. Liu, F.~Pirot, and
  A.~Scott.
\newblock Asymptotic dimension of minor-closed families and {A}ssouad-{N}agata
  dimension of surfaces.
\newblock {\em Journal of the European Mathematical Society},
  26(10):3739--3791, 2023.

\bibitem{brandstadt1999distance}
A.~Brandst{\"a}dt, V.~Chepoi, and F.~Dragan.
\newblock Distance approximating trees for chordal and dually chordal graphs.
\newblock {\em Journal of Algorithms}, 30(1):166--184, 1999.

\bibitem{chakraborty2025k}
D.~Chakraborty.
\newblock ${K}_{2, 3}$-induced minor-free graphs admit quasi-isometry with
  additive distortion to graphs of tree-width at most two.
\newblock {\em arXiv preprint arXiv:2503.00798}, 2025.

\bibitem{chakraborty2026isometric}
D.~Chakraborty, J.~Chalopin, F.~Foucaud, and Y.~Vax{\`e}s.
\newblock Isometric path complexity of graphs.
\newblock {\em Discrete Mathematics}, 349(2):114743, 2026.

\bibitem{c22isom}
D.~Chakraborty, A.~Dailly, S.~Das, F.~Foucaud, H.~Gahlawat, and S.~K. Ghosh.
\newblock Complexity and algorithms for {ISOMETRIC} {PATH} {COVER} on chordal
  graphs and beyond.
\newblock In {\em {ISAAC}}, volume 248 of {\em LIPIcs}, pages 12:1--12:17,
  2022.

\bibitem{chakraborty2025strong}
D.~Chakraborty and F.~Foucaud.
\newblock Strong isometric path complexity of graphs: Asymptotic minors,
  restricted holes, and graph operations.
\newblock {\em arXiv:2501.10828}, 2025.

\bibitem{chepoi2000note}
V.~Chepoi and F.~Dragan.
\newblock A note on distance approximating trees in graphs.
\newblock {\em European Journal of Combinatorics}, 21(6):761--766, 2000.

\bibitem{chepoi2012constant}
V.~Chepoi, F.~F. Dragan, I.~Newman, Y.~Rabinovich, and Y.~Vax{\`e}s.
\newblock Constant approximation algorithms for embedding graph metrics into
  trees and outerplanar graphs.
\newblock {\em Discrete \& Computational Geometry}, 47:187--214, 2012.

\bibitem{davies2026fat}
J.~Davies, R.~Hickingbotham, F.~Illingworth, and R.~McCarty.
\newblock Fat minors cannot be thinned (by quasi-isometries).
\newblock {\em Analysis and Geometry in Metric Spaces}, 14(1):20250036, 2026.

\bibitem{dourisboure2007tree}
Y.~Dourisboure and C.~Gavoille.
\newblock Tree-decompositions with bags of small diameter.
\newblock {\em Discrete Mathematics}, 307(16):2008--2029, 2007.

\bibitem{dragan2025graph}
F.~F. Dragan.
\newblock Graph parameters that are coarsely equivalent to tree-length.
\newblock {\em arXiv:2502.00951}, 2025.

\bibitem{georgakopoulos2025graph}
A.~Georgakopoulos and P.~Papasoglu.
\newblock Graph minors and metric spaces.
\newblock {\em Combinatorica}, 45(3):33, 2025.

\bibitem{nguyen2025asymptotic}
T.~Nguyen, A.~Scott, and P.~Seymour.
\newblock Asymptotic structure. {II}. {P}ath-width and additive quasi-isometry.
\newblock {\em arXiv preprint arXiv:2509.09031}, 2025.

\bibitem{papasoglu2026additivequasiisometriescacti}
P.~Papasoglu and E.~Swenson.
\newblock Additive quasi-isometries and cacti.
\newblock {\em Arxiv: 2609.15917}, 2026.

\end{thebibliography}

\end{document}